\begin{document}
\bibliographystyle{unsrt}
\title{Poisson--Lie Sigma Models on Drinfel'd double}
\author{Jan Vysok\'y, Ladislav Hlavat\'y}
\maketitle
{Czech Technical University in Prague, Faculty of Nuclear Sciences
and Physical Engineering,  B\v rehov\'a 7, 115 19 Prague 1, Czech
Republic}

\newtheorem{thm}{Theorem}[section]
\newtheorem{lem}[thm]{Lemma}

\newtheorem{tvrzx}[thm]{Proposition}
\newenvironment{tvrz}{\begin{tvrzx}}{\smallskip\end{tvrzx}}

\newtheorem{corx}[thm]{Corollary}
\newenvironment{cor}{\begin{corx}}{\smallskip\end{corx}}

\newtheorem{lemmax}[thm]{Lemma}
\newenvironment{lemma}{\begin{lemmax}}{\smallskip\end{lemmax}}

\newtheorem{theoremx}[thm]{Theorem}
\newenvironment{theorem}{\begin{theoremx}}{\smallskip\end{theoremx}}

\theoremstyle{definition}
\newtheorem{definicex}[thm]{Definition}
\newenvironment{definice}{\begin{definicex}}{\medskip\end{definicex}}

\theoremstyle{remark}
\newtheorem{remx}[thm]{Remark}
\newenvironment{rem}{\begin{remx}}{\medskip\end{remx}}

\theoremstyle{definition}
\newtheorem{examplex}[thm]{Example}
\newenvironment{example}{\begin{examplex}}{\medskip\end{examplex}}

\newcommand\ts{\rule{0pt}{2.6ex}}
\newcommand\bs{\rule[-1.2ex]{0pt}{0pt}}

\def\R{\mathbb{R}}
\def\C{\mathbb{C}}
\def\N{\mathbb{N}}
\def\Z{\mathbb{Z}}
\def\T{\mathbb{T}}
\def\P{\mathcal{P}}
\def\Q{\mathcal{Q}}
\def\<{\langle}
\def\>{\rangle}
\def\~{\widetilde}
\def\^{\wedge}
\def\g{\mathfrak{g}}
\def\d{\mathfrak{d}}
\def\gxg{\mathfrak{g} \otimes \mathfrak{g}}
\def\X{\mathcal{X}}
\def\Pt{\mathbf{P}}
\def\io{\mathit{i}}

\def\ddt{\left. \frac{d}{dt}\right|_{t=0} \hspace{-0.5cm}}
\def\ddtnospace{\left. \frac{d}{dt}\right|_{t=0}}
\def\dds{\left. \frac{d}{ds}\right|_{s=0} \hspace{-0.5cm}}
\newcommand{\ddy}[2]{\left. \frac{\partial}{\partial{y^{#1}}}\right|_{\sss{#2}}}
\newcommand{\ddx}[2]{\left. \frac{\partial}{\partial{x^{#1}}}\right|_{\sss{#2}}}
\newcommand{\ddygen}[1]{\frac{\partial}{\partial{y^{#1}}}}
\newcommand{\ddxdole}[2]{\left. \frac{\partial}{\partial{x_{#1}}}\right|_{\sss{#2}}}
\newcommand{\ddXdole}[2]{\left. \frac{\partial}{\partial{X_{#1}}}\right|_{\sss{#2}}}
\newcommand{\ddTdole}[2]{\left. \frac{\partial}{\partial{T_{#1}}}\right|_{\sss{#2}}}
\newcommand{\cffx}[2]{\frac{\partial{#1}}{\partial{x^{#2}}}}

\newcommand{\mind}[2]{#1 \cdots #2}
\newcommand{\ul}[1]{\underline{#1}}
\newcommand{\sch}[1]{[\![#1]\!]}
\renewcommand{\qedsymbol}{$\blacksquare$}

\def\sss{\scriptscriptstyle}

\begin{abstract}
Poisson sigma models represent an interesting use of Poisson
manifolds for the construction of a classical field theory. Their
definition in the language of fibre bundles is shown and the
corresponding field equations are derived using a coordinate
independent variational principle.

The elegant form of equations of motion for so called Poisson-Lie
groups is derived. 

Construction of the Poisson-Lie group corresponding to a given Lie
bialgebra is widely known only for coboundary Lie bialgebras. Using
the adjoint representation of Lie group and Drinfel'd double we show
that Poisson-Lie group can be constructed for general Lie bialgebra.
\end{abstract}

\tableofcontents
\section{Introduction}
A theory of Poisson manifolds gives us a possibility to construct an
interesting field theory (classical). It was first introduced by
Peter Schaller and Thomas Strobl in 1994 in \cite{rakusaci1}. It
turned out that this theory included limit cases of the well-known models,
such as two-dimensional Yang-Mills model or $\mathcal{R}^2$ gravity
model. The basic idea is to use a Poisson manifold as a target,
instead of a Riemannian manifold.

We will in detail review the definition of Poisson sigma model
fields, using the formalism of fibre bundles. This is in great detail discussed in \cite{bojowaldstrobl}.
After the necessary introduction of involved objects, we will proceed and give a
definition of a globally defined action integral of the model.

Usually (e.g. \cite{rakusaci1}, \cite{spanele} or \cite{rakusaci2}) the
equations of motion are derived using the Lagrange formalism of the classical field theory.  
In subsection \ref{sec_psmvar} we will derive the equations of motion starting once again from the
variational principle, after giving a geometric sense to the
variation of the fields.

In subsection \ref{sec_psmlinear} we show one of the simplest non-trivial examples
of a Poisson manifold - linear Poisson structure. We work out (as was already done by many others, see e.g. \cite{spanele}) an intrinsic form of the equations of motion using the involved Lie algebra structures. We intend this as an important but simple example, where the "zero curvature" equation for the field of Poisson sigma model appears.

Poisson-Lie groups constitute a natural combination of Poisson structures with the theory of Lie groups. The Poisson bivector of Poisson-Lie group is then a multiplicative bivector field, allowing the computation of its intrinsic derivative. This intrinsic derivative is in fact a Lie bialgebra structure, $1$-cocycle of particular Lie algebra cohomology.

More interestingly, for given Lie bialgebra there exists a unique Poisson bivector on a corresponding Lie group, so that this Lie bialgebra is its intrinsic derivative \cite{luweinstein}. For coboundary Lie bialgebras there exists a well known explicit formula for the Poisson bivector, called Sklyanin bracket (see e.g. \cite{luweinstein}).

However, there exists a procedure for a general Lie bialgebra, not so widely known at the moment. This is why we present it in the subsection \ref{sec_construction}, not going into details.
The key element is the equivalent description of Lie bialgebra - Manin triple, Lie algebra with distinguished Lie subalgebras and a non-degenerate bilinear ad-invariant symmetric form. Corresponding (connected) Lie group, called Drinfel'd double, allows us to define (mutually dual) Poisson-Lie group structures on its Lie subgroups.

In section \ref{sec_psm} we will in detail discuss the case, where the target Poisson manifold of the model is moreover a Poisson-Lie group. We call such models a Poisson-Lie sigma models. With the help of globally valid equations of motion (with no restrictions on the range of the fields at all) derived in subsection \ref{sec_psmvar}, we can use global frame fields of Lie group, right-invariant vector fields, to derive a intrinsic (coordinate-free) form of equations of motion for Poisson-Lie sigma model. Most interestingly, we again get a "zero curvature" form of equation for the fields of the model.  

This result can be reproduced using the language of Lie algebroid morphisms, which was described in \cite{bojowaldstrobl}. 

In the final subsection we give a simple example of non-linear $2$-dimensional Poisson-Lie sigma model with non-coboundary Lie bialgebra, using the construction of Poisson-Lie group structure on Drinfel'd double. 

\section{Poisson sigma models}
\subsection{Poisson manifolds}\label{poissmf} The aim of this section is to
introduce the elements and notation of the Poisson geometry. Its
central point lies in the study of Poisson manifolds -
differentiable manifolds equipped with a Poisson bracket. The
convenient way to describe such structures is the language of
multivector fields, completely skew-symmetric contravariant tensor
fields on manifolds. This is why we have included a brief
introduction to multivector fields.

\begin{definice}
Let $M$ be a differentiable manifold of dimension $n$. We denote
$\mathcal{T}^{k}_0(M)$ the space of $k$-times contravariant tensor
fields on $M$ and $L_k(M)$ its subspace of completely skew-symmetric
tensor fields. We define the {\bfseries{multivector field algebra}}
as a direct sum
\begin{equation}
L(M) := \bigoplus_{k=-\infty}^{+\infty} {L_k(M)}
\end{equation}
equipped with the exterior product $\^$, where $L_0 \equiv
C^{\infty}(M)$ and $L_k = 0$ for $k < 0$ and $k > n$. Multivectors
lying in the particular $L_k(M)$ subspace are called
{\bfseries{homogeneous}}. The {\bfseries{grade}} of a homogeneous
non-zero element $X$ of some $L_k(M)$ is defined as $k$ and denoted
$|X| = k$.

The elements of $L_k(M)$ in the form of $X_1 \^ \cdots \^ X_k, \
X_i \in L_1(M) \equiv \mathfrak{X}(M)$, are called
{\bfseries{simple}}.
\end{definice}

\begin{definice}
{\bfseries{Schouten-Nijenhuis bracket}} $[\cdot,\cdot]: L(M) \times
L(M) \rightarrow L(M)$ is the $\R$-bilinear mapping defined on
homogeneous simple elements of $L(M)$ as
\begin{equation} \label{def_sb}
[X_1 \^ \cdot \cdot \^ X_n, Y_1 \^ \cdot \cdot \^ Y_m] := \end{equation}
\[
\sum_{i=1}^n \sum_{j=1}^m { (-1)^{i+j} [X_i,Y_j] \^ X_1 \^ \cdot
\cdot \^ \widehat{X_i} \^ \cdot \cdot \^ X_n \^ Y_1 \^ \cdot \cdot
\^ \widehat{Y_j} \^ \cdot \cdot \^ Y_m},
\]
where $\widehat{X_i}$ denotes the omission of the vector field in
the product and $[X_i,X_j]$ is an ordinary commutator of the vector
fields. For $f \in L_0(M)$ and homogeneous $X \in L(M)$ the
Schouten-Nijenhuis bracket is defined as
\begin{equation}
[f,X] := -\mathit{i}_{df}{X},
\end{equation}
\begin{equation}
[X,f] := (-1)^{(|X|+1)}\mathit{i}_{df}{X},
\end{equation}
where $\mathit{i}$ is the common interior product operator
(insertion operator).
\end{definice}
It is obvious that the Schouten-Nijenhuis bracket of two multivector
fields is again a multivector field, it is just the sum of exterior
products of vector fields on $M$.
During a study of the classical Hamiltonian mechanics there
naturally arises an interesting geometric structure, a Poisson
bracket of two observable quantities (functions on a phase space).
We define Poisson bracket on an arbitrary differentiable manifold
$M$.
\begin{definice}
Differentiable manifold $M$ is called a {\bfseries{Poisson
manifold}} if it is equipped by an additional structure $\{ \cdot ,
\cdot \}$ called a {\bfseries{Poisson bracket}}, which is a bilinear
map $C^{\infty}(M) \times C^{\infty}(M) \rightarrow C^{\infty}(M)$
having the following properties:
\begin{equation} \label{pb_antisymmetry}
(\forall f,g \in C^{\infty}(M)) \ (\{f,g\} = - \{g,f\}).
\end{equation}
\begin{equation} \label{pb_jacobi}
(\forall f,g,h \in C^{\infty}(M)) \ (\{f,\{g,h\}\} + \{h,\{f,g\}\} +
\{g,\{h,f\}\} = 0).
\end{equation}
\begin{equation} \label{pb_derivation}
(\forall f,g,h \in C^{\infty}(M)) \ (\{fg,h\} = f\{g,h\} +
\{f,h\}g).
\end{equation}
In the other words, $\{\cdot,\cdot\}$ adds on $C^{\infty}(M)$ the
Lie algebra structure and for every $h \in C^{\infty}(M)$ the map
$\{\cdot,h\}: C^{\infty}(M) \rightarrow C^{\infty}(M)$ lies in
$Der(C^{\infty}(M))$.
\end{definice}

The Poisson bracket on $M$ can be easily encoded into the special
bivector field $\P$ on $M$, called (not surprisingly) a Poisson
bivector on $M$.

The skew-symmetry of the Poisson bracket will lead onto a
skew-symmetry of $\P$ (this is why we talk about a bivector field).
The Leibniz rule (\ref{pb_derivation}) will be satisfied "for free"
by every bivector field $\P$. The only problem arises with the
Jacobi identity for the Poisson bracket. It turns out that it can be
encoded into the words of Schouten-Nijenhuis bracket, which we have
canonically defined on every manifold $M$.

\begin{tvrz}
Let $M$ be a differentiable manifold. Every Poisson bracket
$\{\cdot,\cdot\}$ on $M$ corresponds to the unique bivector field
$\P \in L_2(M)$, such that
\begin{equation} \label{pt_schoutenvanish}
[\P,\P] = 0.
\end{equation}
Conversely, every bivector $\P \in L_2(M)$ satisfying
(\ref{pt_schoutenvanish}) can be used to define a Poisson bracket on
$M$. The bivector field $\P$ is called a {\bfseries{Poisson bivector
(field)}} on $M$.
\end{tvrz}

\subsection{Fibre bundles} For definition of dynamical variables of
Poisson sigma models we shall need the concept of the vector bundle
map. For this reason we will bring in a few definitions essential
for the proper setting. We do not intend to go in details, there
exists a plenty of classical literature on this topic.

Let us denote differential fibre bundle as $E \stackrel{\pi}{\rightarrow} M$
and the set of all its smooth  sections as $\Gamma(M,E)$. The subset of global smooth sections is
denoted as $\Gamma_{G}(M,E)$.

\begin{definice}
Let $E \stackrel{\pi}{\rightarrow} M$, $E' \stackrel{\pi'}{\rightarrow} N$ be two fibre bundles. A pair $(f,g): E \rightarrow E'$ is called the {\bfseries{bundle map}}, if:
\begin{enumerate}
\item $f: M \rightarrow N$ is a smooth map of the base manifolds.
\item $g: E \rightarrow E'$ is a smooth map of the total spaces.
\item Following diagram commutes:
\[ \begin{array}{ccc} E & \stackrel{g}{\longrightarrow} & E' \\ \downarrow_{\pi} & \ & \downarrow_{\pi'} \\ M & \stackrel{f}{\longrightarrow} & N \end{array}\]
\end{enumerate}

If $E$ and $E'$ are vector bundles, we call $(f,g)$ the {\bfseries{vector bundle map}}, if $(f,g): E \rightarrow E'$ is a bundle map and
$g: E \rightarrow E'$ is linear "in the fibres", i.e.:
\begin{equation} (\forall p \in M)(\forall u,v \in \pi^{-1}(p))(\forall \alpha \in \R) \ \big( g(\alpha u + v) = \alpha g(u) + g(v) \big). \end{equation}
\end{definice}

\begin{definice}(see e.g. \cite{nakahara})
Let $N$ and $M$ be differentiable manifolds and $E
\stackrel{\pi}{\rightarrow} M$ be a fibre bundle with the typical
fibre $F$. Let $\varphi: N \rightarrow M$ be a smooth map. Then we
can induce a fibre bundle structure above $N$, called the
{\bfseries{pullback bundle}}, as follows:

Its total space $\varphi^{\ast}(E)$ is defined as
\[ \varphi^{\ast}(E) := \{ (p,u) \in N \times E \ | \ \varphi(p) = \pi(u) \}. \]
Its base manifold is $N$, projection is $\pi': (p,u) \in \varphi^{\ast}(E) \mapsto p \in N$ and typical fibre $F$ remains the same.
It is clear that the fibre $F_{p}$ at $p \in N$ is just the same, as the fibre $F_{\varphi(p)}$ of the original bundle.

\end{definice}

\subsection{Fields, action} Let us suppose we have a $2$-dimensional
differentiable orientable manifold $\Sigma$, called usually the
{\bfseries{worldsheet}}. Let us remark that in general we do not
demand a (pseudo-)metric structure on $\Sigma$. We usually take
$\partial{\Sigma} = \emptyset$ (empty boundary) and we want $\Sigma$
to be such manifold, where integration and the Stokes' theorem have
a good sense. Otherwise we have to impose some (boundary) conditions
on the fields.

Next suppose an $n$-dimensional Poisson manifold $(M,\P)$. Again we do not demand $M$ to be a (pseudo-)Riemannian manifold. The manifold $M$ is called a {\bfseries{target manifold}}.

\begin{definice}
{\bfseries{Dynamical field of Poisson sigma model}} is a vector bundle map $(X,A): T\Sigma \rightarrow T^{\ast}M$, where $T\Sigma$ is the tangent bundle of $\Sigma$ and $T^{\ast}M$ is the cotangent bundle of $M$.

$X: \Sigma \rightarrow M$ is a smooth map of the base manifolds, whereas $A: T\Sigma \rightarrow T^{\ast}M$ is a smooth map of the total spaces.
\end{definice}

\begin{rem}
Let $E \stackrel{\pi}{\rightarrow} M$ be a vector bundle. Its set of global smooth sections $\Gamma_{G}(M,E)$ has a natural $C^{\infty}(M)$-linear structure:
\[ \big( \forall \sigma,\tau \in \Gamma_{G}(M,E) \big) \big( \forall f \in C^{\infty}(M) \big)\big( \forall p \in M \big) \ \big( (f\sigma + \tau)(p) := f(p) \sigma(p) + \tau(p) \big). \]
In particular, we will use this property for the pullback bundle $X^{\ast}(T^{\ast}M)$ and the set $\Gamma_{G}(\Sigma,X^{\ast}(T^{\ast}M))$.
\end{rem}

\begin{definice}
A $1$-form $\alpha$ on $\Sigma$ with values in the set of global smooth sections $\Gamma_{G}(\Sigma,X^{\ast}(T^{\ast}M))$ is a smooth assignment $p \in \Sigma \mapsto \alpha(p)$, where $\alpha(p): T_{p}(\Sigma) \rightarrow \pi'^{-1}(p) \equiv T^{\ast}_{X(p)}(M) $ is a linear map. $\pi'$ denotes the projection of the pullback bundle $X^{\ast}(T^{\ast}M)$.

We define the action of $\alpha$ on a smooth vector field $V \in \mathfrak{X}(\Sigma)$ as
\begin{equation}
\< \alpha, V \>(p) := \< \alpha(p), V_{p} \> \in T_{X(p)}^{\ast}(M),
\end{equation}
for every $p \in \Sigma$, where $V_{p}$ denotes the value of $V$ at $p$. $\< \alpha, V \>$ can be thus interpreted as the global section of the pullback bundle $X^{\ast}(T^{\ast}M)$.

The requirement of the smooth assignment can be then more precisely stated as the smoothness of the section $\< \alpha, V \>$.

Therefore $\alpha$ defines a $C^{\infty}(\Sigma)$-linear map from $\mathfrak{X}(\Sigma)$ to $\Gamma_{G}(\Sigma,X^{\ast}(T^{\ast}M))$.

In the same way we can define a $k$-form on $\Sigma$ with values in $\Gamma_{G}(\Sigma,X^{\ast}(T^{\ast}M))$ and a $k$-form on $\Sigma$ with values in $\Gamma_{G}(\Sigma,X^{\ast}(TM))$, where $X^{\ast}(TM)$ is the pullback bundle of $TM$ by $X$.
\end{definice}

\begin{rem}
Previous definition can be equivalently stated as follows:
$1$-form $\alpha$ on $\Sigma$ with values in $\Gamma_{G}(\Sigma,X^{\ast}(T^{\ast}M))$ is a $C^{\infty}(\Sigma)$-linear map from $\mathfrak{X}(\Sigma)$  to $\Gamma_{G}(\Sigma,X^{\ast}(T^{\ast}M))$.

The reason and proof is similar to the same statement for ordinary $1$-forms $\Omega^{1}(\Sigma)$.
\end{rem}

\begin{definice}
The space of $k$-forms on $\Sigma$ ($k \in \{0,1,2\}$) with values in $\Gamma_{G}(\Sigma,X^{\ast}(T^{\ast}M))$ is denoted as $\Omega^{k}(\Sigma,X^{\ast}(T^{\ast}M))$. In the same way, $\Omega^{k}(\Sigma,X^{\ast}(TM))$ denotes the space of $k$-forms on $\Sigma$ with values in $\Gamma_{G}(\Sigma,X^{\ast}(TM))$.
\end{definice}

\begin{lemma}
Let $(X,A)$ be a vector bundle map $(X,A): T\Sigma \rightarrow T^{\ast}M$.

The map $A$ of the total spaces can be considered as the element of $\Omega^{1}(\Sigma,X^{\ast}(T^{\ast}M))$.
\end{lemma}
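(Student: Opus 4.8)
The plan is to extract from the total-space map $A$ a pointwise family of fibrewise-linear maps and then recognise this family, via the remark characterising $\Omega^{1}(\Sigma,X^{\ast}(T^{\ast}M))$, as a $C^{\infty}(\Sigma)$-linear map $\mathfrak{X}(\Sigma) \to \Gamma_{G}(\Sigma,X^{\ast}(T^{\ast}M))$. The whole argument is an unwinding of the three defining conditions of a vector bundle map against the definition of a vector-valued $1$-form, so no computation beyond bookkeeping should be needed.

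First I would use the commutativity of the defining square to pin down the target of $A$ fibrewise. Writing $\pi: T\Sigma \to \Sigma$ and $\pi': T^{\ast}M \to M$ for the two projections, the bundle-map condition reads $\pi' \circ A = X \circ \pi$. Hence for a fixed $p \in \Sigma$ and any $v \in T_{p}\Sigma = \pi^{-1}(p)$ we get $\pi'(A(v)) = X(p)$, so $A$ sends the whole fibre $T_{p}\Sigma$ into $(\pi')^{-1}(X(p)) = T^{\ast}_{X(p)}M$. The vector-bundle-map hypothesis then says precisely that the restriction $A_{p} := A|_{T_{p}\Sigma}: T_{p}\Sigma \to T^{\ast}_{X(p)}M$ is linear. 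Next I would identify $T^{\ast}_{X(p)}M$ with the fibre of the pullback bundle: by definition of $X^{\ast}(T^{\ast}M)$ its fibre over $p$ is $\{p\} \times T^{\ast}_{X(p)}M$, canonically isomorphic to $T^{\ast}_{X(p)}M$. Under this identification $A_{p}$ becomes a linear map $\alpha(p): T_{p}\Sigma \to (\pi')^{-1}(p) \equiv T^{\ast}_{X(p)}M$, exactly of the type appearing in the definition of a $1$-form with values in $\Gamma_{G}(\Sigma,X^{\ast}(T^{\ast}M))$.

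It then remains to check the two conditions from that definition, namely smoothness and $C^{\infty}(\Sigma)$-linearity. For any $V \in \mathfrak{X}(\Sigma)$ the section $\langle \alpha, V\rangle$ sends $p$ to $A(V_{p})$; since $V: \Sigma \to T\Sigma$ and $A: T\Sigma \to T^{\ast}M$ are both smooth, the assignment $p \mapsto (p, A(V_{p}))$ is a smooth section of the pullback bundle, which is exactly the smoothness required of the assignment $p \mapsto \alpha(p)$. The $C^{\infty}(\Sigma)$-linearity is where the fibrewise linearity of $A$ is genuinely used: for $f \in C^{\infty}(\Sigma)$ and $V, W \in \mathfrak{X}(\Sigma)$ we have $(fV + W)_{p} = f(p)\,V_{p} + W_{p}$ inside the single fibre $T_{p}\Sigma$, so linearity of $A_{p}$ gives $A((fV+W)_{p}) = f(p)\,A(V_{p}) + A(W_{p})$. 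Read off pointwise, this is precisely the defining relation of the $C^{\infty}(\Sigma)$-linear structure on $\Gamma_{G}(\Sigma,X^{\ast}(T^{\ast}M))$ recalled in the earlier remark, whence $\langle \alpha, fV + W\rangle = f\,\langle \alpha, V\rangle + \langle \alpha, W\rangle$.

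By the remark equating $1$-forms with $C^{\infty}(\Sigma)$-linear maps, the object $\alpha$ so constructed is an element of $\Omega^{1}(\Sigma,X^{\ast}(T^{\ast}M))$, and by construction it encodes exactly the same data as $A$ (one recovers $A$ from $\alpha$ by $A(v) = \alpha(p)(v)$ for $v \in T_{p}\Sigma$). The only point demanding any care is the smoothness verification, since it must be phrased in terms of smoothness of sections of the pullback bundle rather than of $A$ directly; everything else follows mechanically from the definitions, and it is the commutativity of the bundle-map diagram that makes the fibrewise identification well defined in the first place.
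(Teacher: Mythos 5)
Your proof is correct and follows essentially the same route as the paper's: both define $\alpha(p)$ as the fibrewise restriction of $A$ (using the commutative square to see that $T_{p}\Sigma$ lands in $T^{\ast}_{X(p)}M$) and verify linearity in the fibre together with smoothness of the resulting sections $\langle\alpha,V\rangle$. The only differences are that the paper additionally proves the converse --- every $\alpha\in\Omega^{1}(\Sigma,X^{\ast}(T^{\ast}M))$ arises as $A_{\alpha}$ for a vector bundle map $(X,A_{\alpha})$, which the statement as written does not require but the paper uses to identify $A$ with $\alpha_{A}$ afterwards --- and that your explicit check of $C^{\infty}(\Sigma)$-linearity is already implicit in the pointwise definition of $\alpha(p)$, so it is harmless but not strictly needed.
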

\begin{proof}
Let $A$ be the total space map from the vector bundle map $(X,A)$. We can define $\alpha_{A} \in \Omega^{1}(\Sigma,X^{\ast}(T^{\ast}M))$ as
\begin{equation} \<\alpha_{A},V\>(p) \equiv \<\alpha_{A}(p),V_{p}\> := A(V_{p}) \in T_{X(p)}^{\ast}(M) \equiv \pi'^{-1}(p), \end{equation}
where $V \in \mathfrak{X}(\Sigma)$ is a smooth vector field. $\alpha_{A}(p)$ thus indeed maps linearly vectors from $T_{p}(\Sigma)$ to $\pi'^{-1}(p)$.
>From the smoothness of $V$ and of the map $A$ follows, that the resulting section $\<\alpha_{A},V\>$ is smooth. Hence $\alpha_{A} \in \Omega^{1}(\Sigma,X^{\ast}(T^{\ast}M))$.

Conversely, let $\alpha \in \Omega^{1}(\Sigma,X^{\ast}(T^{\ast}M))$. Then for $u \in T\Sigma$ we define the map $A_{\alpha}$ as
\begin{equation} A_{\alpha}(u) := \< \alpha(\pi(u)) , u \>, \end{equation}
where $\pi$ is the projection of the tangent bundle $T\Sigma$. The map $A_{\alpha}$ clearly satisfies the condition
\[ \~\pi \circ A_{\alpha} = X \circ \pi, \]
where $\~\pi$ denotes the projection of the cotangent bundle $T^{\ast}M$ and it is linear in the fibres. The smoothness of $A_{\alpha}$ follows from the smoothness of $\alpha$ and the projection $\pi$. Hence $(X,A_{\alpha})$ is a vector bundle map.
\end{proof}

\begin{rem}
Since now we will not distinguish between $A$ and $\alpha_{A}$, and we will use the notation $A(p) \equiv \alpha_{A}(p)$ for $p \in \Sigma$.
\end{rem}

\begin{rem}
Let $(y^{1},\dots,y^{n})$ be a set of local coordinates on $M$. $A$ can be locally expanded as
\begin{equation}  \label{psm_expansion} A(p) = A_{i}(p) \left. dy^{i} \right|_{\sss{X(p)}}, \end{equation}
for $p \in \Sigma$. $A_{i} \in \Omega^{1}(\Sigma)$ are uniquely determined $1$-forms on $\Sigma$, called the {\bfseries{component $1$-forms}} of $A$.
We will use the notation
\begin{equation} A = A_{i} dy^{i}. \end{equation}
\end{rem}
\begin{rem}
In fact, one has to be very careful, because expansion (\ref{psm_expansion}) has the good sense only for such $p \in \Sigma$, where $X(p)$ stays in the area of $M$, where coordinates $(y^{1},\dots,y^{n})$ are defined. Hence $A_{i}$ are, strictly speaking, not uniquely determined in the "evil points" of $\Sigma$.

However, taking Poisson-Lie groups as target manifolds, we have global frame fields on $M$ - left-(right-)invariant vector fields. This solves our problem in such cases.

Alternatively, we can impose on the map $X$ to not "come out" of the chosen coordinate patch.
\end{rem}
Let us examine for a moment the map $X_{\ast}$ tangent to the map $X: \Sigma \rightarrow M$. Constructed at given point $p \in \Sigma$, it maps linearly
$T_{p}(\Sigma) \rightarrow T_{X(p)}(M)$. Therefore we can define a $1$-form $dX$ on $\Sigma$ with values in $\Gamma_{G} \big( \Sigma,X^{\ast}(TM)\big)$, as:
\[ \<dX,Y\>(p) := X_{\ast}(Y_{p}) \in T_{X(p)}(M), \]
for $Y \in \mathfrak{X}(\Sigma)$. If we write it in the local coordinates $(y^{1},\dots,y^{n})$ on $M$ and local coordinates $(\sigma^{1},\sigma^{2})$ on $\Sigma$, we get
\[ \<dX,Y\>(p) := Y^{\mu}(p) X_{\ast}( \left. \frac{\partial}{\partial \sigma^{\mu}} \right|_{p} ) = Y^{\mu}(p) \left. \frac{\partial X^{i}}{\partial \sigma^{\mu}} \right|_{p} \ddy{i}{X(p)}.\]
Hence
\[ dX(p) = dX^{i}(p) \ddy{i}{X(p)}, \]
where $dX^{i} := X^{\ast}(dy^{i})$. Let us remark that $dX$ can be viewed as a total space map of the bundle map $(X,dX): T\Sigma \rightarrow TM$.

Canonical pairing on $M$ allows us to define the induced pairing of $k$-forms on $\Sigma$ with values in the global sections of pullback bundles $X^{\ast}(TM)$ and $X^{\ast}(T^{\ast}M)$ respectively.

\begin{definice}
Let $A \in \Omega^{k}(\Sigma,X^{\ast}(T^{\ast}M))$, $B \in \Omega^{l}(\Sigma,X^{\ast}(TM))$. $A = A_{i} dy^{i}$, $B = B^{j} \frac{\partial}{\partial y^{j}}$.
Then we define a pairing of $A$ with $B$ as
\begin{equation} \label{psm_pairingdef} \<A,B\>(p) := A_{i}(p) \^ B^{j}(p) \ \< \left. dy^{i} \right|_{\sss{X(p)}}, \left. \frac{\partial}{\partial y^{j}} \right|_{\sss{X(p)}} \hspace{-0.4cm} \> = A_{i}(p) \^ B^{i}(p), \end{equation}
for all $p \in \Sigma$. Hence $\<A,B\> \in \Omega^{k+l}(\Sigma)$.
This definition does not depend on the particular choice of coordinates in $M$.
\end{definice}

Let $p \in \Sigma$ and $\P$ be a Poisson bivector on $M$, Let $A \in
\Omega^{1}(\Sigma,X^{\ast}(T^{\ast}M))$. We define
\[ \io_{A}(\P)(X(p)) := A_{i}(p) \P(X(p))(dy^{i}|_{\sss{X(p)}},\cdot) = A_{i}(p) \P^{ij}(X(p)) \ddy{j}{X(p)}. \]
This is an expansion of some $V \in \Omega^{1}(\Sigma,X^{\ast}(TM))$. We set $\io_{A}(\P)(X) := V$.

\begin{definice}
Let $\Sigma$ be a $2$-dimensional orientable differentiable manifold, let $(M,\P)$ be an $n$-dimensional Poisson manifold. A {\bfseries{Poisson sigma model}} is a field model defined by the action integral
\begin{equation} \label{psm_action} S[X,A] := \int_{\Sigma} \<A,dX\> - \frac{1}{2} \<A,\io_{A}(\P)(X)\>. \end{equation}
Suppose we have the local coordinates $(y^1,\dots,y^n)$ in some neighbourhood $U \subset M$, $\P^{ij} \equiv \P(dy^{i},dy^{j})$. If $X(\Sigma) \subset U$, we can write the action as
\begin{equation} \label{psm_actioncoord} S[X,A] := \int_{\Sigma} A_{i} \^ dX^{i} + \frac{1}{2} \P^{jk}(X) A_{j} \^ A_{k}, \end{equation}
where $A = A_{i} dy^{i}$ and $dX^{i} = X^{\ast}(dy^{i})$.
\end{definice}

\begin{rem}
Poisson sigma models are in many articles (\cite{rakusaci2}, \cite{rakusaci1}) defined by the action (\ref{psm_actioncoord}), with the map $X$ not restricted to $U$. Strictly speaking, this is not correct, because there exist points of $\Sigma$, where the integrand has no sense at all. However, we can always locally write the Lagrangian:
\begin{equation} L[X,A](p) = A_{i}(p) \^ dX^{i}(p) + \frac{1}{2} \P^{jk}(X(p)) A_{j}(p) \^ A_{k}(p),\end{equation}
for such $p \in \Sigma$, where $X(p) \in U$.
\end{rem}

\subsection{Variational principle, equations of motion}
\label{sec_psmvar} In this section we will in detail derive the
equations of motion of the Poisson sigma model, using the action
(\ref{psm_action}) and a variational principle.

It is quite simple to get the equations of motion from (\ref{psm_actioncoord}), putting $\~X^{i} = X^{i} + \epsilon Y^{i}$, $\~A_{i} = A_{i} + \~\epsilon B_{i}$ and using the ordinary per partes trick in the calculation of $S[\~X,\~A] - S[X,A]$. However, this approach is heavily coordinate-dependent, especially in the case, where the form of action \ref{psm_actioncoord} has no real sense. This led us to the following idea.

We will parametrize each variation of the fields $(X,A)$ by infinitesimal constants $\epsilon, \~\epsilon \in \R$, $|\epsilon|,|\~\epsilon| \ll 1$, $1$-form $B \in \Omega^{1}(\Sigma,X^{\ast}(T^{\ast}M))$ and by a smooth vector field $Y \in \mathfrak{X}(M)$ on $M$, such that its local flow $\phi_{\epsilon}^{Y}(X(p))$ is defined for all $p \in \Sigma$. If $\partial \Sigma \neq \emptyset$, we have to impose $(\forall q \in \partial \Sigma) \ (Y(X(q)) = 0)$.

\begin{figure}[htbp]
\begin{center}
\includegraphics[width=90mm]{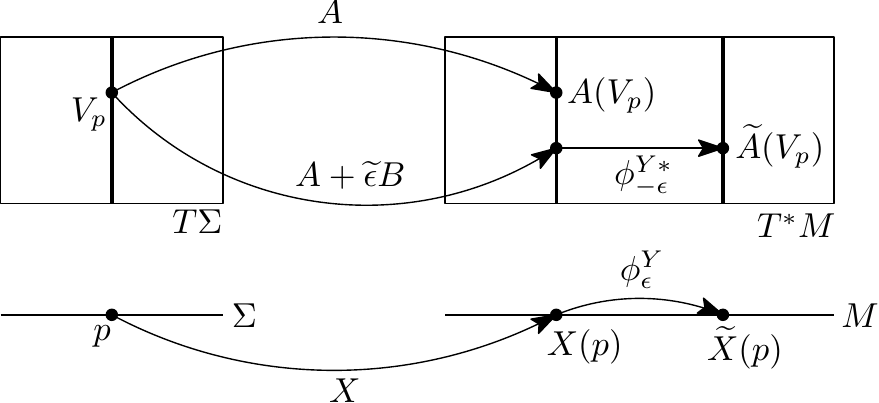}
\caption{Variation of fields $(X,A)$}
\label{fig_variace}
\end{center}
\end{figure}

Shifted fields $\~X$ and $\~A$ are then set as
\begin{equation}
\~X(p) := \phi_{\epsilon}^{Y}(X(p)),
\end{equation}
\begin{equation}
\~A(V_{p}) := \phi_{-\epsilon}^{Y \ast}\big((A + \~\epsilon B)(V_p)\big),
\end{equation}
for all $p \in \Sigma$ and $V_{p} \in T_{p}(\Sigma)$.  For illustration, see the figure \ref{fig_variace}.
It is obvious that $(\~X,\~A)$ is again a vector bundle map $T\Sigma \rightarrow T^{\ast}M$.

Let $(y^{1},\dots,y^{n})$ be a set of local coordinates on $U \subset M$. Vector field $Y$ can be for $x \in U$ expanded as
\begin{equation} Y(x) = Y^{i}(x) \ddy{i}{x}. \end{equation}

$\~A$ can be locally expanded as
\begin{equation} \label{psm_variace_locala} \~A(p) = (A_{i} + \~\epsilon B_{i})(p) \ \phi_{-\epsilon}^{Y \ast} ( \left. dy^{i} \right|_{\sss{X(p)}} ).\end{equation}

For $d\~X$, $p \in \Sigma$ and $V_{p} \in T_{p}(\Sigma)$ we have
\[ \<d\~X(p),V_{p}\> \equiv \~X_{\ast}(V_{p}) = \phi_{\epsilon \ast}^{Y} (X_{\ast}(V_{p})),\]
and thus locally
\begin{equation} \label{psm_variace_localdx}
d\~X(p) = dX^{i}(p) \ \phi_{\epsilon \ast}^{Y}(\ddy{i}{X(p)}).
\end{equation}

We are now ready to proceed with the computation of the first order (in $\epsilon$ and $\~\epsilon$) term in the difference of the Lagrangians $L[\~X,\~A] - L[X,A]$, where
\begin{equation} L[X,A] := \<A,dX\> - \frac{1}{2} \<A,\io_{A}(\P)(X)\>. \end{equation}

>From the definition of pairing (\ref{psm_pairingdef}) and the local
expansions (\ref{psm_variace_locala}) and
(\ref{psm_variace_localdx}) of $\~A$ and $d\~X$ respectively it is
clear, that
\begin{equation} \label{psm_variace_1}
\< \~A, d\~X \> = \<A, dX\> + \~\epsilon \<B, dX\>.
\end{equation}

To deal with the second term of $L[\~X,\~A]$, we should remind that
$\phi_{\epsilon}^{Y}$ is a diffeomorphism of $M$. Therefore for $x
\in U$ we can use $\phi_{\epsilon \ast}^{Y}( \ddy{i}{x} )$ as the
basis of the tangent space $T_{\phi_{\epsilon}^{Y}(x)}(M)$.

Poisson bivector $\P$ can be thus expanded at $\~X(p) \equiv
\phi_{\epsilon}^{Y}(X(p))$ expanded as
\begin{equation} \P(\~X(p)) = \P_{\ast}^{ij}(\~X(p)) \ \phi_{\epsilon \ast}^{Y}(\ddy{i}{X(p)}) \otimes \phi_{\epsilon \ast}^{Y}(\ddy{j}{X(p)}), \end{equation}
where
\[
\P_{\ast}^{ij}(\~X(p) = \P(\~X(p))\big( \phi_{-\epsilon}^{Y \ast}(\left. dy^{i} \right|_{\sss{X(p)}} ), \phi_{-\epsilon}^{Y \ast}(\left. dy^{j} \right|_{\sss{X(p)}}) \big)
\equiv \]
\begin{equation} \label{psm_pstar} \equiv \phi_{-\epsilon \ast}^{Y} \big( \P(\~X(p)) \big) \Big( \left. dy^{i} \right|_{\sss{X(p)}}, \left. dy^{j} \right|_{\sss{X(p)}}\Big).  \end{equation}

In the following we will omit the explicit writing of $(p)$ in every term, but we still mean everything written at the particular point $p \in \Sigma$.
For infinitesimal $\epsilon$ we can rewrite $\P_{\ast}^{ij}(\~X)$ as
\[ \P_{\ast}(\~X)^{ij} = \phi_{-\epsilon \ast}^{Y} \big( \P(\~X) \big) \Big( \left. dy^{i} \right|_{\sss{X}}, \left. dy^{j} \right|_{\sss{X}}\Big) = \]
\[ = \big( \P(X) + \epsilon [\mathcal{L}_{Y}(\P)](X) \big) \Big( \left. dy^{i} \right|_{\sss{X}}, \left. dy^{j} \right|_{\sss{X}}\Big) = \]
\[ = \P^{ij}(X) + \epsilon [\mathcal{L}_{Y}(\P)]^{ij}(X). \]
Then we can write
\[ \io_{\~A}(\P)(\~X) = \P(\~X)(\~A,\cdot) = (A_{i} + \~\epsilon B_{i}) \ \P(\~X)\big( \phi_{-\epsilon}^{Y \ast}( \left. dy^{i} \right|_{\sss{X}} ) , \cdot \big) = \]
\[ = (A_{i} + \~\epsilon B_{i}) \P_{\ast}^{ij}(\~X) \ \phi_{\epsilon \ast}^{Y}(\ddy{j}{X}). \]
Hence
\[ -\frac{1}{2}\<\~A,\io_{\~A}(\P)(\~X)\> = -\frac{1}{2} (A_{k} + \~\epsilon B_{k}) \^ (A_{i} + \~\epsilon B_{i}) \P_{\ast}^{ik}(\~X) = \]
\[ = -\frac{1}{2} A_{k} \^ A_{i} \P^{ik}(X) - \~\epsilon B_{k} \^ A_{i} \P^{ik} - \epsilon \frac{1}{2} A_{k} \^ A_{i} [\mathcal{L}_{Y}(\P)]^{ik}, \]
where we omitted the second order terms in the infinitesimal parameters $\epsilon$ and $\~\epsilon$.
Therefore finally
\begin{equation} \label{psm_variace_2}
-\frac{1}{2}\<\~A,\io_{\~A}(\P)(\~X)\> = -\frac{1}{2}\<A,\io_{A}(\P)(X)\> - \~\epsilon \<B,\io_{A}(\P)(X)\>  -\epsilon \frac{1}{2} \<A,\io_{A}[\mathcal{L}_{Y}(\P)](X)\>.\end{equation}
Putting together (\ref{psm_variace_1}) and (\ref{psm_variace_2}) we get
\begin{equation} \label{psm_variace_rozdil}
L[\~X,\~A] - L[X,A] = \~\epsilon \<B,dX - \io_{A}(\P)(X)\> - \epsilon \frac{1}{2} \<A,\io_{A}[\mathcal{L}_{Y}(\P)](X)\>
\end{equation}

We will state and prove a following lemma to proceed:
\begin{lemma} Let
$\~Y \in \Omega^{0}(\Sigma,X^{\ast}(TM))$ is defined for $p \in
\Sigma$ as
\begin{equation} \label{psm_variace_ytilda}
\~Y(p) := Y(X(p)).
\end{equation}
Its local expansion is $\~Y(p) = Y^{i}(X(p)) \ddy{i}{X(p)} \equiv
\~Y^{i}(p) \ddy{i}{X(p)}$, where we denote $\~Y^{i}(p) :=
Y^{i}(X(p))$. It satisfies (in the first order in $\epsilon$)
\begin{equation} \label{psm_variace_lemma} d\~Y^{i} = {Y^{i}}_{,m}(X) dX^{m}. \end{equation}
\end{lemma}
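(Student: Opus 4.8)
The plan is to observe that each component $\~Y^i = Y^i \circ X$ is nothing but the pullback by $X$ of the coordinate component function $Y^i \in C^{\infty}(U)$, that is $\~Y^i = X^{\ast}(Y^i)$, a genuine smooth function on the open set $\{ p \in \Sigma : X(p) \in U \}$. Once this is recognised, the claimed identity is just the chain rule dressed as the naturality of the exterior derivative under pullback, applied to a scalar function on $\Sigma$.

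Concretely, I would first fix local coordinates $(\sigma^1,\sigma^2)$ on $\Sigma$ together with the coordinates $(y^1,\dots,y^n)$ on $U \subset M$, and write $d\~Y^i = \frac{\partial (Y^i \circ X)}{\partial \sigma^{\mu}} d\sigma^{\mu}$, where $d$ is the ordinary de Rham differential on $\Sigma$. Applying the ordinary chain rule to the composition $Y^i \circ X$ gives $\frac{\partial (Y^i \circ X)}{\partial \sigma^{\mu}} = \left( \frac{\partial Y^i}{\partial y^m} \circ X \right) \frac{\partial X^m}{\partial \sigma^{\mu}}$, with $X^m := y^m \circ X$. I would then recall the coordinate expression of $dX^m$ established earlier in the excerpt: since $dX^m = X^{\ast}(dy^m) = \frac{\partial X^m}{\partial \sigma^{\mu}} d\sigma^{\mu}$, collecting the terms yields $d\~Y^i = ({Y^i}_{,m} \circ X)\, dX^m = {Y^i}_{,m}(X)\, dX^m$, which is exactly the assertion. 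Equivalently, and more invariantly, one may argue $d\~Y^i = d(X^{\ast} Y^i) = X^{\ast}(dY^i) = X^{\ast}({Y^i}_{,m}\, dy^m) = {Y^i}_{,m}(X)\, dX^m$, using that $d$ commutes with pullback of functions and that $X^{\ast}(dy^m) = dX^m$.

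There is no real obstacle here: the identity is an exact consequence of the chain rule, so the parenthetical \emph{in the first order in $\epsilon$} only signals the context in which the lemma will be applied, where $\~Y$ plays the role of the first-order variation field through $\~X^i \approx X^i + \epsilon\, \~Y^i$; it reflects no approximation in the proof itself, since $\~Y(p) = Y(X(p))$ is manifestly $\epsilon$-independent. The single point deserving a word of care is the domain of validity: $\~Y^i$ and the whole coordinate computation make sense only where $X(p)$ remains inside the chart $U$, precisely the "evil points" caveat already noted in the text; since both sides are defined and smooth there and the statement is purely local, working on this open set is enough.
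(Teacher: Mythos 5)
Your proof is correct, but it takes a genuinely different route from the paper's. The paper never invokes the chain rule directly: it computes $d\~X^{i}$ in two ways and compares them to first order in $\epsilon$. On one hand, expanding the flow as $\~X^{i} = X^{i} + \epsilon \~Y^{i}$ gives $d\~X^{i} = dX^{i} + \epsilon\, d\~Y^{i}$; on the other hand, starting from $d\~X = dX^{i}\,\phi^{Y}_{\epsilon\ast}(\partial_{y^{i}})$ and expanding the pushed-forward frame via the Lie derivative, $\phi^{Y}_{\epsilon\ast}(\partial_{y^{i}}) = \partial_{y^{i}} - \epsilon\,[\mathcal{L}_{Y}(\partial_{y^{i}})] + O(\epsilon^{2})$, it arrives at $d\~X^{i} = \bigl(dX^{i} + \epsilon\,{Y^{i}}_{,m}(X)\,dX^{m}\bigr)$ in the frame at $\~X$; matching the $\epsilon$-linear terms yields the lemma. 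This explains the otherwise puzzling qualifier \emph{in the first order in $\epsilon$} in the statement: it is an artifact of the paper's derivation, not of the identity. Your argument, by contrast, recognises $\~Y^{i} = X^{\ast}(Y^{i})$ and reduces everything to $d \circ X^{\ast} = X^{\ast} \circ d$ on functions, which is shorter, exact, and makes transparent that the identity is $\epsilon$-independent --- a point you correctly flag. What the paper's longer route buys is internal consistency with the variation machinery: it reuses the already-established expansion (\ref{psm_variace_localdx}) of $d\~X$ and keeps all objects expressed in the moving frame at $\~X(p)$, which is the form in which the lemma is then consumed in the derivation of the second equation of motion. Your caveat about restricting to the open set where $X(p)$ stays in the chart $U$ matches the paper's own ``evil points'' discussion and is handled appropriately.
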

\begin{proof}
Expansion of $\~X = \phi_{\epsilon}^{Y}(X)$ in local coordinates on $M$ reads
\[ \~X^{i} = X^{i} + \epsilon Y^{i}(X) = X^{i} + \epsilon \~Y^{i}. \]
Hence
\begin{equation} \label{psm_variace_lemmaeq} d\~X^{i} = dX^{i} + \epsilon d\~Y^{i}. \end{equation}
On the other side, as we know, $d\~X^{i}$ constitute the component $1$-forms of $d\~X \in \Omega^{1}(\Sigma,\~X^{\ast}(TM))$. We can proceed from (\ref{psm_variace_localdx}):
\[ d\~X = dX^{i} \ \phi_{\epsilon \ast}^{Y}(\ddy{i}{X}) = dX^{i} \Big( \ddy{i}{\~X} - \epsilon [\mathcal{L}_{Y}( \frac{\partial}{\partial y^{i}} )](\~X) \Big) =\]
\[ = dX^{i} \ddy{i}{\~X} + \epsilon dX^{i} \ {Y^{m}}_{,i}(\~X) \ddy{m}{\~X} = dX^{i} \ddy{i}{\~X} + \epsilon dX^{i} \ {Y^{m}}_{,i}(X) \ddy{m}{\~X} =\]
\[ = (dX^{i} + \epsilon {Y^{i}}_{,m}(X) dX^{m}) \ddy{i}{\~X}. \]
Comparison with (\ref{psm_variace_lemmaeq}) gives us (\ref{psm_variace_lemma}).
\end{proof}

We can now step to the derivation of the equations of motion. We impose the condition of extremality for $(X,A)$, that is
\begin{equation} S[\~X,\~A] - S[X,A] = 0. \end{equation}
>From (\ref{psm_variace_rozdil}) this is equivalent to
\[ \int_{\Sigma} \~\epsilon \<B,dX - \io_{A}(\P)(X)\> - \epsilon \frac{1}{2} \<A,\io_{A}[\mathcal{L}_{Y}(\P)](X)\> = 0. \]
By putting $\epsilon = 0$ (variation of the $1$-form $A$ only), we get the first equation of the motion in the form
\begin{equation} \label{psm_eqm1a} dX = \io_{A}(\P)(X), \end{equation}
or in the local coordinates as
\begin{equation} \label{psm_eqm1b} dX^{i} + \P^{ij}(X)A_{j} = 0. \end{equation}

For the analysis of the second term we can (and have to) use the
equation (\ref{psm_eqm1b}):
\[ -\frac{1}{2} \<A, \io_{A}(\mathcal{L}_{Y}(P))(X)\> = \frac{1}{2} A_{i} \^ A_{k} \ [\mathcal{L}_{Y}(\P)]^{ik}(X) = \]
\[ = \frac{1}{2} A_{i} \^ A_{k} \Big( Y(\P^{ik})(X) - {Y^{i}}_{,m}(X) \P^{mk}(X) - {Y^{k}}_{,m}(X) \P^{im}(X) \Big) = \]
\[ = \frac{1}{2} A_{i} \^ A_{k} Y(\P^{ik})(X) + \P^{mi}(X) A_{i} \^ A_{k} {Y^{k}}_{,m}(X) \stackrel{(\ref{psm_eqm1b})}{=} \]
\[ \stackrel{(\ref{psm_eqm1b})}{=} \frac{1}{2} A_{i} \^ A_{k} \~Y^{m} {\P^{ik}}_{,m}(X) - dX^{m} {Y^{k}}_{,m}(X) \^ A_{k} \stackrel{(\ref{psm_variace_lemma})}{=} \]
\[ \stackrel{(\ref{psm_variace_lemma})}{=} \frac{1}{2} A_{i} \^ A_{k} \~Y^{m} {\P^{ik}}_{,m}(X) - d\~Y^{k} \^ A_{k} = \]
\[ = \~Y^{m} \Big( dA_{m} + \frac{1}{2} {\P^{ik}}_{,m}(X) A_{i} \^ A_{k} \Big) - d (\~Y^{k} A_{k}). \]
The boundary term is clearly coordinate invariant and it vanishes under integration. The first term is not coordinate dependent for $(X,A)$ satisfying (\ref{psm_eqm1b}), which is enough for the derivation of the extremal equation. Both terms can be thus with no problems integrated. Hence we get the second equation (we can choose the vector field $Y$ (almost) arbitrarily):
\begin{equation} dA_{k} + \frac{1}{2} {\P^{ij}}_{,k}(X) A_{i} \^ A_{j} = 0.\end{equation}

This equation cannot be written globally in general, but it transforms itself well for $(X,A)$ solving the first equation. We will again find various possibilities for linear Poisson sigma models or Poisson-Lie sigma models, where we can use more global structures. We can sum up the preceding text in the following proposition \cite{rakusaci1}:

\begin{tvrz}
The extremal fields $(X,A)$ of the Poisson sigma model given by the action (\ref{psm_action}) have to satisfy the equations written locally as
\begin{equation} \label{psm_eqm1} dX^{i} + \P^{ij}(X) A_{j} = 0, \end{equation}
\begin{equation} \label{psm_eqm2} dA_{k} + \frac{1}{2} {\P^{ij}}_{,k}(X) A_{i} \^ A_{j} = 0. \end{equation}
\end{tvrz}

\begin{rem}
Using a Poisson bivector $\P$, one can define a vector bundle map $(\#,Id_{M})$, where $\#: T^{\ast}M \rightarrow TM$ is given as
\begin{equation} \#(\alpha) := \P(m)(\alpha,\cdot), \end{equation}
for $\alpha \in T_{m}^{\ast}(M)$, $m \in M$. $\#$ is usually called a Poisson anchor.

Equation (\ref{psm_eqm1}) can be then equivalently stated as
\begin{equation} \label{psm_eqm1bundle} \# \circ A = dX, \end{equation}
where $dX: T\Sigma \rightarrow TM$ is a vector bundle map over $X$, as is $\# \circ A$.
\end{rem}

\subsection{Linear Poisson sigma model} \label{sec_psmlinear}
In this section we will introduce a notation and the form of
equations (\ref{psm_eqm1}) and (\ref{psm_eqm2}) for a linear Poisson
sigma model.

Let $\g$ be a finite-dimensional real Lie algebra with a Lie bracket
$[\cdot,\cdot]$. We can consider its dual space $\g^{\ast}$ as a
differentiable manifold.

We can use an arbitrary (but fixed) basis $(T_i)_{i=1}^n$ of $\g$ as
the (global) coordinates on $\g^{\ast}$. The tangent space
$T_{\xi}(\g^{\ast})$ at any point $\xi \in \g^{\ast}$ can be
identified with $\g^{\ast}$ itself, there exists the linear
isomorphism $\mathbf{A}_{\xi}: \g^{\ast} \rightarrow
T_{\xi}(\g^{\ast})$, such that $\mathbf{A}_{\xi}(T^{i}) =
\ddTdole{i}{\xi}$, where $(T^{i})$ is a basis of $\g^{\ast}$ dual to
$(T_i)$.

We get the same map $\mathbf{A}_{\xi}$ for every basis of $\g$, that
is if $(Y_i)_{i=1}^n$ is also a basis of $\g$, then
$\mathbf{A}_{\xi}(Y^{i}) = \left. \frac{\partial}{\partial{Y_i}}
\right|_{\xi}$. We can then define a Poisson bracket of $f,g \in
C^{\infty}(\g^{\ast})$ as

\begin{equation} \{f,g\}(\xi) := \< \xi , [\mathbf{A}_{\xi}^{\ast}((df)_{\xi}),\mathbf{A}_{\xi}^{\ast}((dg)_{\xi})] \> ,\end{equation}
for all $\xi \in \g^{*}$. $A_{\xi}^{\ast}: T_{\xi}^{\ast}(\g^{\ast})
\rightarrow \g$ is the linear map dual to $A_{\xi}$.

One should easily check, that for arbitrary smooth functions $f,g$
on $\g^{*}$ and $\xi \in \g^{*}$
\begin{equation} \label{pb_example1}
\{f,g\}(\xi) = \P_{ij}(\xi) \left. \frac{\partial f}{\partial
T_i}\right|_{\xi} \left. \frac{\partial g}{\partial
T_j}\right|_{\xi},
\end{equation}
where
\begin{equation}
\P_{ij}(\xi) = {c_{ij}}^{k} \< \xi, T_k \> \equiv {c_{ij}}^{k} \xi_k
\end{equation}
and $c_{ij}^{k}$ are the structure coefficients of $\g$ with respect
to $(T_i)_{i=1}^n$, i.e.
\begin{equation}
[T_i,T_j] = {c_{ij}}^{k} T_k.
\end{equation}
A skew-symmetry and a derivation property of the Poisson bracket can
be seen from the definition. Coordinate expression of the Jacobi
identities reads
\begin{equation}
({c_{jk}}^{r} {c_{ri}}^{s} + {c_{ki}}^{r} {c_{rj}}^{s} +
{c_{ij}}^{r} {c_{rk}}^{s})\< \xi , T_s \> = 0,
\end{equation}
which is true because of the Jacobi identities for the Lie bracket
on $\g$. The dual space $\g^{*}$ with the Poisson bracket
(\ref{pb_example1}) is then a Poisson manifold. For obvious reasons
it is called {\bfseries{linear Poisson structure}} (components of
$\P$ are linear functions on $\g^{\ast}$).

\begin{definice}A coadjoint representation $ad^{\ast}$ of $\g$ on $\g^{\ast}$ is defined by
\begin{equation} \<ad_{Y}^{\ast}(\xi),Z\> := -\<\xi,ad_{Y}(Z)\>,\end{equation}
for all $Y,Z \in \g$ and $\xi \in \g^{\ast}$, where $\<\cdot,\cdot\>$ denotes the canonical pairing on $\g$.
\end{definice}

Let us remind a useful property of the Killing form $K$ of
semisimple Lie algebras.

\begin{lemma}
Let $\g$ be a finite-dimensional semisimple Lie algebra. Then the inverse of the Killing form $K^{-1}$ is $ad^{\ast}$-invariant, that is
\begin{equation} \label{lem_killing} K^{-1}(ad_{X}^{\ast}(\xi),\eta) + K^{-1}(\xi,ad_{X}^{\ast}(\eta)) = 0,\end{equation}
$\forall \xi,\eta \in \g^{\ast}$ and $\forall X \in \g$.
\end{lemma}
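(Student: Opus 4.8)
The plan is to reduce the claimed invariance of $K^{-1}$ on $\g^{\ast}$ to the standard $ad$-invariance of the Killing form $K$ on $\g$, transported through the isomorphism that $K$ itself induces between $\g$ and $\g^{\ast}$. Recall that for \emph{any} Lie algebra the Killing form satisfies the associativity (invariance) identity
\begin{equation*}
K([X,V],W) + K(V,[X,W]) = 0, \qquad \forall X,V,W \in \g,
\end{equation*}
which follows immediately from $K(X,Y)=\mathrm{tr}(ad_X\,ad_Y)$ together with cyclicity of the trace. Semisimplicity enters only through Cartan's criterion, which guarantees that $K$ is non-degenerate; this is exactly what makes $K^{-1}$, and hence the statement itself, meaningful.

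First I would introduce the musical isomorphism $\flat:\g\to\g^{\ast}$, $\flat(V):=K(V,\cdot)$, which is bijective by non-degeneracy, and denote its inverse by $\sharp$. With this notation $K^{-1}(\xi,\eta)=K(\sharp\xi,\sharp\eta)$ for all $\xi,\eta\in\g^{\ast}$, so the whole claim can be rephrased entirely in terms of $K$ once the two representations are related.

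The key step is to show that $\flat$ intertwines the adjoint and coadjoint actions, i.e. $ad_X^{\ast}\circ\flat=\flat\circ ad_X$ for every $X\in\g$. I would verify this by fixing $V\in\g$ and testing both sides against an arbitrary $Z\in\g$, using the definition of $ad^{\ast}$ and then the invariance identity:
\begin{equation*}
\< ad_X^{\ast}(\flat V), Z \> = -\< \flat V, [X,Z] \> = -K(V,[X,Z]) = K([X,V],Z) = \< \flat(ad_X V), Z \>.
\end{equation*}
Equivalently $\sharp\circ ad_X^{\ast}=ad_X\circ\sharp$. Substituting this into the two terms of the claim, with $V=\sharp\xi$ and $W=\sharp\eta$, gives
\begin{equation*}
K^{-1}(ad_X^{\ast}\xi,\eta) + K^{-1}(\xi,ad_X^{\ast}\eta) = K(ad_X V, W) + K(V, ad_X W) = K([X,V],W)+K(V,[X,W]),
\end{equation*}
which vanishes by invariance of $K$, completing the argument.

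I expect the only delicate point to be keeping the signs straight in the intertwining relation: the minus sign in the definition of $ad^{\ast}$ must cancel exactly against the minus sign produced by the invariance identity. Once that is pinned down everything is purely formal. A fully equivalent but more index-heavy route works in a fixed basis $(T_i)$: one lowers the structure constants with $K_{ij}=K(T_i,T_j)$, observes that invariance of $K$ makes ${c_{ij}}^{l}K_{lk}$ totally antisymmetric, raises indices with $K^{ij}$ to obtain the companion relation ${c_{ai}}^{l}K^{im}+{c_{ai}}^{m}K^{il}=0$, and checks directly that the two component terms cancel; I would prefer the coordinate-free version above as cleaner and more transparent.
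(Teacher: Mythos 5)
Your argument is correct: semisimplicity gives non-degeneracy of $K$ via Cartan's criterion, the musical isomorphism $\flat$ intertwines $ad$ and $ad^{\ast}$ precisely because of the invariance identity $K([X,V],W)+K(V,[X,W])=0$, and transporting both terms back to $\g$ reduces the claim to that identity. The paper itself states this lemma without proof, as a reminder of a classical fact, so there is no proof to compare against; your write-up supplies the standard argument, with the signs in the intertwining relation handled correctly relative to the paper's convention $\<ad_{Y}^{\ast}(\xi),Z\> = -\<\xi,ad_{Y}(Z)\>$.
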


Let $(T_1,\dots,T_n)$ be an arbitrary chosen basis of $\g$.
We will use it as global coordinates on $\g^{\ast}$. Let $A = A^{i} dT_{i}$, $A \in \Omega^{1}(\Sigma,X^{\ast}(T^{\ast}\g^{\ast}))$. We will see that equations of motion can be in this case written in a simple, coordinate-free way.

We define a $1$-form $\~A$ on $\Sigma$ with values in Lie algebra $\g$:
\begin{equation} \~A := A^{i} T_{i}. \end{equation}
It is clear that $\~A$ does not depend on the choice of the basis $(T_{i})_{i=1}^{n}$.

We can consider $X$ as the $0$-form on $\Sigma$ with values in $\g^{\ast}$, that is
\begin{equation} \label{psm_linXdef} X(p) = T_{i}(X(p)) T^{i} \equiv X_{i}(p) T^{i}, \end{equation}
for all $p \in \Sigma$.

To avoid confusion: $dX$ in the following proposition stays for $1$-form with values in $\g^{\ast}$, obtained as the exterior
derivative of $0$-form $X$ defined by (\ref{psm_linXdef}).

This point of view allows us to state the following proposition:

\begin{tvrz}
Suppose we have a linear Poisson sigma model and notation described above. The equations of motion (\ref{psm_eqm1}) and (\ref{psm_eqm2}) can be then written in the form
\begin{equation} \label{psm_eqm1lin} dX + ad_{\~A}^{\ast}(X) = 0, \end{equation}
\begin{equation} \label{psm_eqm2lin} d\~A + \frac{1}{2} [\~A \^ \~A]_{\g} = 0. \end{equation}
\end{tvrz}

Moreover, if $\g$ is a semisimple Lie algebra, we can rewrite (\ref{psm_eqm1lin}) as
\begin{equation} \label{psm_eqm1linss} d\~X + [\~A,\~X]_{\g} = 0, \end{equation}
where $\~X := K^{-1}(X,\cdot)$ is a $0$-form on $\Sigma$ with values in $\g$ and $K^{-1}$ is the inverse of the Killing form.

\begin{proof}
For the linear Poisson structure, we have $\P_{ij}(\xi) = {c_{ij}}^{k} \xi_{k}$ and $\left. \frac{\partial{\P_{ij}}}{\partial T_k}\right|_{\xi} = {c_{ij}}^{k}$, where $[T_i,T_j] = {c_{ij}}^{k} T_k$. Therefore the equations (\ref{psm_eqm1}) take the form
\begin{equation} dX_{i} + X_{k} {c_{ij}}^{k} A^{j} = 0. \end{equation}
Now we will compute $\<T_i,dX + ad_{\~A}^{\ast}(X)\>$, where $\<\cdot,\cdot\>$ is the canonical pairing on $\g$:
\[ 0 = \<T_i,dX + ad_{\~A}^{\ast}(X)\> = dX_i + \<T_i,ad_{\~A}^{\ast}(X)\> = dX_i - \<ad_{\~A}(T_i),X\> = \]
\[ = dX_i - A^{j} X_k \<ad_{T_j}(T_i),T^k\> = dX_i + A^{j} X_k {c_{ij}}^{k}. \]
Thus the equation (\ref{psm_eqm1lin}) is equal to the set of equations (\ref{psm_eqm1}). Let us proceed to the second set of equations. If we again use the properties of $\P$ of the linear Poisson structure, we obtain (\ref{psm_eqm2}) in the form
\begin{equation} dA^i + \frac{1}{2} {c_{jk}}^{i} A^{j} \^ A^{k} = 0. \end{equation}
But ${c_{jk}}^{i} A^{j} \^ A^{k} = \<T^{i}, (A^{j} \^ A^{k}) \ [T_j,T_k]_{\g} \> \equiv \<T^{i}, [\~A \^ \~A]_{\g} \>$.

To obtain the last part of the statement, we take arbitrary $\xi \in \g^{\ast}$ and using the equation (\ref{psm_eqm1lin}) we write
\[0 = K^{-1}(dX + ad_{\~A}^{\ast}(X),\xi) = K^{-1}(dX,\xi) + K^{-1}(ad_{\~A}^{\ast}(X),\xi). \]
For the first term we have straight from the definition $K^{-1}(dX,\xi) \equiv \<\xi,d\~X\>$.
We apply lemma \ref{lem_killing} to rewrite the second term:
\[ K^{-1}(ad_{\~A}^{\ast}(X),\xi) = -K^{-1}(X,ad_{\~A}^{\ast}(\xi)) \equiv - \<ad_{\~A}^{\ast}(\xi),\~X\> = \<\xi,ad_{\~A}(\~X)\> = \<\xi,[\~A,\~X]_{\g}\>.\]
Hence $\<\xi,d\~X + [\~A,\~X]_{\g}\> = 0$ for all $\xi \in \g^{\ast}$, and we get $d\~X + [\~A,\~X]_{\g} = 0$.
\end{proof}

\begin{rem}
In fact, we can rewrite the action (\ref{psm_action}) using the $1$-form $\~A$ as
\begin{equation} S[X,\~A] = \int_{\Sigma} \<X,(d\~A + \frac{1}{2} [\~A \^\~A])\>_{\g}. \end{equation}

Equation $(\ref{psm_eqm2lin})$ then follows immediately.
\end{rem}
An interesting fact to observe is that equation (\ref{psm_eqm2lin}) does not involve the field $X$ at all.

\section{Poisson-Lie sigma models} \label{sec_psm}
We would be most interested in Poisson sigma models, where target
Poisson manifold is moreover a Poisson-Lie group. We shall call such
models Poisson-Lie sigma models. In next subsections there are given
the basics of the theory of Poisson-Lie groups and notation which we
will extensively use in the following.

\subsection{Poisson-Lie groups, Lie bialgebras}
\begin{definice}
A Lie group $G$, which is also a Poisson manifold, is called a {\bfseries{Poisson-Lie group}}, if the group multiplication map $\mu: G \times G \rightarrow G$ is a Poisson map, considering $G \times G$ endowed with the product Poisson structure.
\end{definice}
This property is equivalent to the multiplicativity of the
corresponding Poisson bivector $\P$, that means that
\begin{equation} \label{multiplicativity} \P(gh) = L_{g \ast}(\P(h)) + R_{h \ast}(\P(g)), \end{equation}
for all $g,h \in G$.

Note that $\P(e) = 0$, that is non-trivial Poisson-Lie group is
never symplectic, not even of constant rank. The simplest example of
Poisson-Lie group is a linear Poisson structure, considered as
Abelian group under addition.

Poisson structure $\P$ on $G$ induces an additional algebraic
structure on its Lie algebra $\g$. It turns out that it is a
$1$-cocycle of Chevalley-Eilenberg Lie algebra cohomology with
respect to the adjoint representation of $\g$ on $\g \otimes \g$,
which induces the second Lie algebra structure on the dual vector
space $\g^{\ast}$.

\begin{definice}
{\bfseries{Lie bialgebra}} $(\g,\delta)$ is a Lie algebra $\g$ equipped with an additional structure, a linear map $\delta : \g \rightarrow \gxg$, such that
\begin{enumerate}
\item $\delta^{\ast} : \g^{\ast} \otimes \g^{\ast} \rightarrow \g^{\ast}$ is a Lie bracket on $\g^{\ast}$,
\item $\delta$ is a $1$-cocycle of $\g$ with values in $\gxg$, i.e. for every $X,Y \in \g$
\begin{equation} \label{bi_def_cocycle}
\Delta(\delta)(X,Y) \equiv ad_X^{(2)}\delta(Y) - ad_Y^{(2)}\delta(X) - \delta([X,Y]) = 0,
\end{equation}
where $ad_{X}^{(2)}(Y \otimes Z) := ad_{X}(Y) \otimes Z + Y \otimes ad_{X}(Z)$, for all $X,Y,Z \in \g$.
\end{enumerate}
The linear map $\delta$ is usually called a {\bfseries{cocommutator}} on $\g$. $\delta^{\ast}$ denotes the transposition with respect to the canonical pairing. Let us denote the Lie bracket on $\g^{\ast}$ as $[\cdot,\cdot]_{\g^{\ast}}$.
\end{definice}

In the following we will extensively use an equivalent description
of Lie bialgebra structure, called Manin triple. The $1$-cocycle
condition is translated into the Jacobi identities for Lie bracket.
It is known that Lie bialgebras are in one to one correspondence
with Manin triples.

\begin{definice}
A {\bfseries{Manin triple}} $(\d,\g,\~\g)$, is a triple of Lie algebras $\d$, $\g$, $\~\g$, such that $\d = \g \oplus \~\g$ as vector spaces, $\g, \~\g$ are Lie subalgebras of $\d$ and as vector subspaces they are isotropic with respect to a non-degenerate, symmetric, ad-invariant bilinear form $\<\cdot,\cdot\>_{\d}$ on $\d$.
\end{definice}

We should now introduce a notation of a convenient basis, which we would use in the following. From the properties of Manin triple it can be easily shown that $\dim(\g) = \dim(\~\g)$. We denote $n = \dim(\g)$. Moreover, for given basis $(T_{i})_{i=1}^{n}$ of $\g$ we can uniquely choose the basis $(\~T^{j})_{j=1}^{n}$ of $\~\g$, such that $(T_{i},\~T^{j})_{i,j=1}^{n,n}$ is a basis of $\d$ and
\begin{equation} \label{canform} \<T_{i},\~T^{j}\>_{\d} = {\delta_{i}}^{j}. \end{equation}
Other combinations vanish due to the isotropy of subalgebras $\g$
and $\~\g$. In the following we will always use a basis of $\d$ of
such form.

If we denote $[T_{i},T_{j}] = {c_{ij}}^{k} T_{k}$ and
$[\~T^{i},\~T^{j}] = {f^{ij}}_{k} \~T^{k}$ the structure constants
of $\g$ and $\~\g$, one could obtain from isotropy and ad-invariance
of $\<\cdot,\cdot\>_{\d}$ that
\begin{equation} \label{psm_mixedrelations} [T_{i},\~T^{j}] = {f^{jk}}_{i} T_{k} - {c_{ik}}^{j} \~T^{k}. \end{equation}
The structure constants ${f^{ij}}_{k}$ of $\~\g$ are same as the structure constants of Lie algebra $\g^{\ast}$ of the corresponding Lie bialgebra, written in the basis $(T^{j})_{j=1}^{n}$ dual to $(T_{i})_{i=1}^{n}$.

The Jacobi identities of $\d$ are equivalent to that of $\g$, $\g^{\ast}$ and the $1$-cocycle condition (\ref{bi_def_cocycle}).

If there exists $r \in \g \otimes \g$, such that Lie biagebra cocommutator $\delta$ can be written as $\delta = ad^{(2)}(r) \equiv \Delta(r)$, the resulting Lie bialgebra is called coboundary and $r$ is called an $r$-matrix. Our approach to the Poisson bivector construction can be used for general Lie bialgebra $(\g,\delta)$ and we thus do not need to discuss the coboundary-ness furthermore.

\begin{definice}
Let $\P$ be a multiplicative Poisson bivector. Its intrinsic derivative $D\P: \g \rightarrow \g \otimes \g$ is defined as
\[ D\P(X) := [\mathcal{L}_{\bar{X}}(\P)](e), \]
for all $X \in \g$, where $\bar{X}$ denotes arbitrary vector field extension of $X$.
\end{definice}

\begin{tvrz}
Let $(G,\P)$ be a Poisson-Lie group. Then the intrinsic derivative $D\P$ of $\P$ defines a Lie bialgebra structure on $\g$. Lie bialgebra $(\g,D\P)$ is called a {\bfseries{tangent Lie bialgebra}} to $(G,\P)$.
\end{tvrz}

For proof of this classical proposition see \cite{luweinstein}. We
would expand the $1$-form $A$ not as $A = A_{\alpha} dy^{\alpha}$
(let us denote the coordinate indices in Greek letters for now), but
rather as $A = A_{k} R_{T^k}$, where $R_{T^{k}}$ are the
right-invariant $1$-forms on $G$ dual to $R_{T_{m}}$ frame fields.
We denote $T^{i}_{X}(p) := X^{\ast}\big(R_{T^i}(X(p))\big) \in
T_{p}^{\ast}(\Sigma)$.

\subsection{Equations of motion}
In the following we rewrite the equations of motion in components with respect to the right-invariant frame fields.
Then we will use the properties of multiplicative bivector fields to calculate the action of right-invariant vector fields
on the Poisson bivector components. Using this we will then derive an intrinsic form of the equations of motion.

\begin{lemma}
The equations of motion rewritten in the components with respect to
the right-invariant frame fields take the form
\begin{equation} \label{psm_eqm1pla} T^{i}_X + \Pi^{ij}(X) A_j = 0, \end{equation}
\begin{equation} \label{psm_eqm2pla} dA_{k} + \frac{1}{2} R_{T_k}(\Pi^{ij})(X) A_{i} \^ A_{j} + {c_{kj}}^{i} A_{i} \^ T_{X}^{j} = 0, \end{equation}\
where $\P = \frac{1}{2} \Pi^{ij} R_{T_{i}} \^ R_{T_{j}}$ and
$[T_{i},T_{j}] = {c_{ij}}^{k} T_{k}$.
\end{lemma}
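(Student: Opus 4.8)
The plan is to translate the coordinate equations \eqref{psm_eqm1} and \eqref{psm_eqm2} into the anholonomic right-invariant frame by a careful change of basis, the only genuinely non-trivial input being the non-commutativity of that frame. Fix local coordinates $(y^{\alpha})$ on $G$ (Greek indices for coordinate components) and write the frame and coframe as $R_{T_i} = R^{\alpha}_{i}\,\frac{\partial}{\partial y^{\alpha}}$ and $R_{T^i} = S^{i}_{\alpha}\,dy^{\alpha}$, where the matrix $(S^{i}_{\alpha})$ is the pointwise inverse of $(R^{\alpha}_{i})$, i.e. $S^{i}_{\alpha}R^{\alpha}_{j} = {\delta_{j}}^{i}$ and $R^{\alpha}_{i}S^{i}_{\beta} = {\delta_{\beta}}^{\alpha}$. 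This fixes the dictionary between the two descriptions: from $\P = \frac{1}{2}\Pi^{ij}R_{T_i}\wedge R_{T_j}$ one reads off $\P^{\alpha\beta} = \Pi^{ij}R^{\alpha}_{i}R^{\beta}_{j}$; from $A = A_{\alpha}\,dy^{\alpha} = A_{k}R_{T^k}$ one gets $A_{\alpha} = (S^{k}_{\alpha}\circ X)A_{k}$, equivalently $A_{k} = (R^{\alpha}_{k}\circ X)A_{\alpha}$; and since $T^{i}_{X} := X^{\ast}(R_{T^i})$ one has $T^{i}_{X} = (S^{i}_{\alpha}\circ X)\,dX^{\alpha}$, equivalently $dX^{\alpha} = (R^{\alpha}_{i}\circ X)\,T^{i}_{X}$. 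All frame quantities are functions on $G$, implicitly evaluated along $X$.

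The first equation is immediate. Contracting \eqref{psm_eqm1} with $S^{i}_{\alpha}(X)$ and using $S^{i}_{\alpha}R^{\alpha}_{j} = {\delta_{j}}^{i}$ together with the dictionary,
\[ S^{i}_{\alpha}(X)\big(dX^{\alpha} + \P^{\alpha\beta}(X)A_{\beta}\big) = T^{i}_{X} + \Pi^{ij}(X)A_{j}, \]
so \eqref{psm_eqm1} is equivalent to \eqref{psm_eqm1pla}.

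For the second equation I would substitute $A_{k} = (R^{\alpha}_{k}\circ X)A_{\alpha}$ and differentiate. Since $R^{\alpha}_{k}\circ X$ is a function and $A_{\alpha}$ a $1$-form on $\Sigma$,
\[ dA_{k} = (R^{\alpha}_{k}\circ X)\,dA_{\alpha} + d(R^{\alpha}_{k}\circ X)\wedge A_{\alpha}. \]
Into the first term I insert \eqref{psm_eqm2}, namely $dA_{\alpha} = -\frac{1}{2}(\partial_{\alpha}\P^{\beta\gamma})(X)A_{\beta}\wedge A_{\gamma}$; expanding $\partial_{\alpha}\P^{\beta\gamma}$ by the Leibniz rule from $\P^{\beta\gamma} = \Pi^{lm}R^{\beta}_{l}R^{\gamma}_{m}$ and contracting the free coframe legs produces exactly one clean piece $-\frac{1}{2}R_{T_k}(\Pi^{ij})(X)A_{i}\wedge A_{j}$ (the desired term), plus "anholonomy" pieces built from $R_{T_k}(R^{\alpha}_{l})S^{i}_{\alpha}$. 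Into the second term I use $d(R^{\alpha}_{k}\circ X) = (\partial_{\delta}R^{\alpha}_{k})(X)\,dX^{\delta}$ and $dX^{\delta} = (R^{\delta}_{i}\circ X)T^{i}_{X}$, turning it into a $T\wedge A$ expression whose coefficient is again of anholonomy type.

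The heart of the matter, and the step I expect to be the main obstacle, is showing that all the anholonomy contributions collapse to the single term ${c_{kj}}^{i}A_{i}\wedge T^{j}_{X}$. The mechanism is the commutation relation of the right-invariant frame, $[R_{T_i},R_{T_j}] = -{c_{ij}}^{k}R_{T_k}$ (equivalently the Maurer--Cartan equation for $R_{T^k}$), which in components reads
\[ R_{T_i}(R^{\alpha}_{j})S^{m}_{\alpha} - R_{T_j}(R^{\alpha}_{i})S^{m}_{\alpha} = -{c_{ij}}^{m}. \]
Using this to replace the antisymmetric part of the anholonomy coefficients by structure constants, invoking the first equation \eqref{psm_eqm1pla} to trade a factor $T^{j}_{X}$ for $-\Pi^{jl}A_{l}$ (so the $A\wedge A$ and $T\wedge A$ contributions are compared in a common frame, exactly as \eqref{psm_eqm1b} was used in the original derivation of \eqref{psm_eqm2}), and finally exploiting $\Pi^{ij} = -\Pi^{ji}$ together with the antisymmetry of $A_{i}\wedge A_{j}$, one finds that every remaining non-structure-constant coefficient is symmetric in the pair of wedged indices and therefore drops out. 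What survives is precisely ${c_{kj}}^{i}A_{i}\wedge T^{j}_{X}$, yielding \eqref{psm_eqm2pla}. The difficulty here is purely one of index bookkeeping rather than concept; the decisive checkpoint is that the leftover coefficient multiplying $A_{i}\wedge A_{j}$ is manifestly invariant under $i\leftrightarrow j$ once the antisymmetry of $\Pi$ is applied.
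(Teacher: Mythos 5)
Your proposal is correct and is essentially the paper's own proof read in the opposite direction: the paper multiplies the coordinate equations by the transition matrix (your $S^{k}_{\alpha}$ is its ${e^{k}}_{\alpha}$ and your $R^{\alpha}_{k}$ its ${f^{\alpha}}_{k}$) and factors out ${e^{k}}_{\alpha}$ at the end, while you expand $dA_{k}$ through $dA_{\alpha}$; the actual inputs --- the Leibniz expansion of ${\P^{\beta\gamma}}_{,\alpha}$ with $\P^{\beta\gamma}=\Pi^{ij}R^{\beta}_{i}R^{\gamma}_{j}$, the first equation of motion, and the relation $[R_{T_i},R_{T_j}]=-{c_{ij}}^{k}R_{T_k}$ --- are identical to the paper's ``trick''. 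One caveat about your stated decisive checkpoint: the anholonomy terms do not disappear because the leftover coefficient of $A_{i}\wedge A_{j}$ becomes symmetric in the wedged pair (it does not --- after using the first equation it is ${c_{kl}}^{p}\Pi^{lm}$ contracted into $A_{p}\wedge A_{m}$, which is nonzero); rather, the two anholonomy contributions enter with coefficients $R_{T_k}(R^{\beta}_{l})$ and $R_{T_l}(R^{\beta}_{k}) $ in the \emph{non-wedged} index pair $(k,l)$, so only their difference, i.e.\ the commutator and hence $-{c_{kl}}^{m}$, survives, and that survivor \emph{is} the term ${c_{kj}}^{i}A_{i}\wedge T^{j}_{X}$ rather than something that cancels. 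With that correction the bookkeeping closes exactly as you predict.
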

\begin{proof}
We start from (\ref{psm_eqm1}) and (\ref{psm_eqm2}) written in the
local coordinates $(y^{1},\dots,y^{n})$. We denote the coordinate
indices with Greek letters and right-invariant basis indices with
Latin letters.

\begin{equation} \label{psm_eqm1lem} dX^{\alpha} + \P^{\beta \gamma}(X) A_{\gamma} = 0, \end{equation}
\begin{equation} \label{psm_eqm2lem} dA_{\alpha} + \frac{1}{2} {P^{\beta \gamma}}_{,\alpha}(X) A_{\beta} \^ A_{\gamma} = 0. \end{equation}

We define the $n \times n$ matrix $e$ of functions on $\Sigma$ as
\begin{equation} \label{psm_ematicedef} \ddy{\alpha}{X(p)} = {e{^{k}}}_{\alpha}(p) \ R_{X(p) \ast}(T_k) \end{equation}
and $f(p) := e^{-1}(p)$, for every $p \in \Sigma$.

If $A_{k}$ denotes the component $1$-forms from expansion $A = A_{k}
R_{T^{k}}$, we get
\[ A_{\alpha} = {e^{k}}_{\alpha} A_{k},\]
and for other involved objects
\[ dX^{\alpha} = {f^{\alpha}}_{k} T_{X}^{k}, \ \P^{\beta \gamma}(X) = {f^{\beta}}_{m} {f^{\gamma}}_{n} \Pi^{mn}(X). \]
Equation (\ref{psm_eqm1lem}) can be thus written in the form
\[ T_{X}^{i} + \Pi^{ij}(X) A_{j} = 0. \]
We can now deal with the second equation. First we rewrite the term
$dA_{\alpha}$:
\[ dA_{\alpha} = d\big( {e^{k}}_{\alpha} A_{k} \big) = {de^{k}}_{\alpha} \^ A_{k} + {e^{k}}_{\alpha} dA_{k} = \]
\begin{equation} \label{psm_rinvlem_dpart} = \frac{\partial ( {e^{k}}_{\alpha} )}{ \partial y^{\beta} } dX^{\beta} \^ A_{k} + {e^{k}}_{\alpha} dA_{k}, \end{equation}
where by $\frac{\partial}{ \partial y^{\beta} }$ we always mean
$\left. \frac{\partial}{ \partial y^{\beta} } \right|_{\sss{X}}$.
The second term in (\ref{psm_eqm2lem}) reads
\[ \frac{1}{2} {\P^{\beta \gamma}}_{,\alpha}(X) A_{\beta} \^ A_{\gamma} = \frac{1}{2} \frac{\partial}{\partial y^{\alpha}} \Big( {f^{\beta}}_{m} {f^{\gamma}}_{n} \Pi^{mn}(X) \Big) {e^{a}}_{\beta} {e^{b}}_{\gamma} A_{a} \^ A_{b} = \]
\[ = \frac{1}{2} {e^{k}}_{\alpha} R_{T_k} \Big( {f^{\beta}}_{m} {f^{\gamma}}_{n} \Pi^{mn}(X) \Big) {e^{a}}_{\beta} {e^{b}}_{\gamma} A_{a} \^ A_{b} = \]
\[ = \frac{1}{2} {e^{k}}_{\alpha} {f^{\beta}}_{m} {f^{\gamma}}_{n} R_{T_k}(\Pi^{mn})(X) {e^{a}}_{\beta} {e^{b}}_{\gamma} A_{a} \^ A_{b} + \]
\[ + {e^{k}}_{\alpha} R_{T_k}({f^{\beta}}_{m})(X) {f^{\gamma}}_{n} \Pi^{mn}(X) {e^{a}}_{\beta} {e^{b}}_{\gamma} A_{a} \^ A_{b} =  \]
\[ = \frac{1}{2} {e^{k}}_{\alpha} R_{T_k}(\Pi^{mn})(X) A_m \^ A_n + {e^{k}}_{\alpha} {e^{a}}_{\beta} R_{T_k}({f^{\beta}}_{m})(X) \Pi^{mb} A_{a} \^ A_{b} = \otimes. \]
Using the equation (\ref{psm_eqm1pla}), we can write
\begin{equation} \label{psm_rinvlem_spart} \otimes = \frac{1}{2} {e^{k}}_{\alpha} R_{T_k}(\Pi^{mn})(X) A_m \^ A_n - {e^{k}}_{\alpha} {e^{a}}_{\beta} R_{T_k}({f^{\beta}}_{m})(X) A_{a} \^ T_{X}^{m}. \end{equation}
To continue, we have to use the following trick
\[ R_{T_k}({f^{\beta}}_{m})(X) = R_{T_k}(R_{T_m}(y^{\beta}))(X) = R_{T_m}(R_{T_k}(y^{\beta}))(X) + [R_{T_k},R_{T_m}](y^{\beta})(X) =\]
\[ = R_{T_m}(R_{T_k}(y^{\beta}))(X) - c_{km}^{l} R_{T_l}(y^{\beta})(X) = R_{T_m}({f^{\beta}}_{k})(X) - {c_{km}}^{l} {f^{\beta}}_{l}. \]
Using this, we can rewrite the second term in
(\ref{psm_rinvlem_spart}) as
\[ - {e^{k}}_{\alpha} {e^{a}}_{\beta} R_{T_k}({f^{\beta}}_{m})(X) A_{a} \^ T_{X}^{m} = \]
\[ - {e^{k}}_{\alpha} {e^{a}}_{\beta} R_{T_m}({f^{\beta}}_{k})(X) A_{a} \^ T_{X}^{m} + {e^{k}}_{\alpha} {e^{a}}_{\beta} {c_{km}}^{l} {f^{\beta}}_{l} A_{a} \^ T_{X}^{m} = \]
\[ - {e^{k}}_{\alpha} {e^{a}}_{\beta} R_{T_m}({f^{\beta}}_{k})(X) A_{a} \^ T_{X}^{m} + {e^{k}}_{\alpha} {c_{km}}^{a} A_{a} \^ T_{X}^{m} = \boxtimes. \]
We can write
\[ {e^{a}}_{\beta} R_{T_m}({f^{\beta}}_{k})(X) = - R_{T_m}({e^{a}}_{\beta})(X) {f^{\beta}}_{k}. \]
Hence
\[ \boxtimes = {e^{k}}_{\alpha} R_{T_m}({e^{a}}_{\beta})(X) {f^{\beta}}_{k} A_{a} \^ T_{X}^{m} + {e^{k}}_{\alpha} {c_{km}}^{a} A_{a} \^ T_{X}^{m} =\]
\[ = R_{T_m}({e^{a}}_{\alpha})(X) A_{a} \^ T_{X}^{m} + {e^{k}}_{\alpha} {c_{km}}^{a} A_{a} \^ T_{X}^{m} = \]
\[ = {f^{\beta}}_{m} \frac{\partial({e^{a}}_{\alpha})}{\partial y^{\beta}} A_{a} \^ T_{X}^{m} + {e^{k}}_{\alpha} {c_{km}}^{a} A_{a} \^ T_{X}^{m} = \]
\[ = - \frac{\partial({e^{k}}_{\alpha})}{\partial y^{\beta}} dX^{\beta} \^ A_{k} + {e^{k}}_{\alpha} {c_{km}}^{a} A_{a} \^ T_{X}^{m}.\]
Putting this back into (\ref{psm_rinvlem_spart}), we obtain
\[ \frac{1}{2} {\P^{\beta \gamma}}_{,\alpha}(X) A_{\beta} \^ A_{\gamma} = \]
\[ = {e^{k}}_{\alpha} \Big(\frac{1}{2} R_{T_k}(\Pi^{mn})(X) A_m \^ A_n + {c_{km}}^{a} A_{a} \^ T_{X}^{m} \Big) - \frac{\partial({e^{k}}_{\alpha})}{\partial y^{\beta}} dX^{\beta} \^ A_{k}. \]
Together with (\ref{psm_rinvlem_dpart}), we get
\[ dA_{\alpha} + \frac{1}{2} {\P^{\beta \gamma}}_{,\alpha}(X) A_{\beta} \^ A_{\gamma} = \]
\[ = {e^{k}}_{\alpha} \Big(dA_{k} + \frac{1}{2} R_{T_k}(\Pi^{mn})(X) A_m \^ A_n + {c_{km}}^{a} A_{a} \^ T_{X}^{m} \Big). \]
Therefore from (\ref{psm_eqm2lem}), finally:
\[ dA_{k} + \frac{1}{2} R_{T_k}(\Pi^{mn})(X) A_m \^ A_n + {c_{km}}^{a} A_{a} \^ T_{X}^{m} = 0, \]
which was to be proved.
\end{proof}
Up to now, this rewriting does not seem to be that much useful.
Moreover, instead of an ordinary partial derivative, we have the
action of the right-invariant field in (\ref{psm_eqm2pla}).
Fortunately, as we we will show in the following lemma, this is no
obstacle at all. We shall prove the following lemma for slightly
more general case, using just the properties of multiplicative
tensor fields.

\begin{lemma} \label{lem_reactionaction} Let $\P$ be a multiplicative bivector field (not necessarily Poisson) on Lie group $G$, i.e. it satisfies (\ref{multiplicativity}). Let $(T_{i})_{i=1}^{n}$ be an arbitrary basis of Lie algebra $\g$ corresponding to $G$. Let $\Pi^{ij}$ denote the components of $\P$ with respect to the right-invariant frame, that is $\P = \Pi^{ij} R_{T_{i}} \otimes R_{T_{j}}$.

Denote $[T_{i},T_{j}] = {c_{ij}}^{k} T_k$ and $D\P(T_{k}) =
{f^{ij}}_{k} T_{i} \otimes T_{j}$. Then
\begin{equation} \label{lem_raction}
R_{T_{k}}(\Pi^{ij}) = {c_{kl}}^{i} \Pi^{lj} - {c_{kl}}^{j} \Pi^{li}
+ {f^{ij}}_{k}.
\end{equation}
\end{lemma}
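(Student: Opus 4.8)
The plan is to trade the component functions $\Pi^{ij}$ for a single $\gxg$-valued function on $G$ and to turn the multiplicativity identity (\ref{multiplicativity}) into a functional equation that can be differentiated along right-invariant directions. Concretely, set $\~\P(g) := (R_{g^{-1}\ast} \otimes R_{g^{-1}\ast})(\P(g))$. Since $R_{g^{-1}\ast}(R_{T_i}(g)) = T_i$, this gives $\~\P(g) = \Pi^{ij}(g)\, T_i \otimes T_j \in \gxg$, so $\~\P$ merely repackages $\Pi$, and $\~\P(e) = 0$ because multiplicativity forces $\P(e) = 0$. The flow of $R_{T_k}$ is the left translation $x \mapsto \exp(tT_k)\,x$, so for every $g$ one has $R_{T_k}(\Pi^{ij})(g)\, T_i \otimes T_j = \ddtnospace \~\P(\exp(tT_k)\,g)$; this is the object to compute.

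First I would record the intrinsic derivative in these terms. Taking $R_{T_k}$ itself as the vector field extension of $T_k$ in the definition of $D\P$, a Leibniz expansion of $\mathcal{L}_{R_{T_k}}(\Pi^{ij} R_{T_i} \otimes R_{T_j})$ together with $[R_{T_k}, R_{T_i}] = -{c_{ki}}^{l} R_{T_l}$ produces three terms; the two ``connection'' terms each carry a factor $\Pi^{ij}$ and hence vanish at $e$, leaving $D\P(T_k) = R_{T_k}(\Pi^{ij})(e)\, T_i \otimes T_j$. Comparing with $D\P(T_k) = {f^{ij}}_{k} T_i \otimes T_j$ identifies ${f^{ij}}_{k} = R_{T_k}(\Pi^{ij})(e)$, i.e. $\ddtnospace \~\P(\exp(tT_k)) = D\P(T_k)$. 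One should note explicitly that using $R_{T_k}$ as the extension is legitimate precisely because $\P(e) = 0$ makes $D\P$ independent of the chosen extension.

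The core step is the functional equation. Applying $R_{(ag)^{-1}\ast} \otimes R_{(ag)^{-1}\ast}$ to both sides of (\ref{multiplicativity}) written at the pair $(a,g)$, and using the two elementary factorizations $R_{(ag)^{-1}} \circ L_a = C_a \circ R_{g^{-1}}$ and $R_{(ag)^{-1}} \circ R_g = R_{a^{-1}}$, where $C_a(x) = axa^{-1}$ is conjugation with differential $Ad_a$ at $e$, I obtain $\~\P(ag) = Ad_a^{(2)}\, \~\P(g) + \~\P(a)$, with $Ad_a^{(2)} := Ad_a \otimes Ad_a$. Setting $a = \exp(tT_k)$ and differentiating at $t = 0$: the left-hand side yields $R_{T_k}(\Pi^{ij})(g)\, T_i \otimes T_j$; the first right-hand term yields $ad_{T_k}^{(2)}(\~\P(g))$ because $\ddtnospace Ad_{\exp(tT_k)}^{(2)} = ad_{T_k}^{(2)}$ (the operator from (\ref{bi_def_cocycle})); and the last term yields $D\P(T_k) = {f^{ij}}_{k} T_i \otimes T_j$ by the previous paragraph. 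Expanding $ad_{T_k}^{(2)}(\Pi^{lm} T_l \otimes T_m)$ with $[T_k, T_l] = {c_{kl}}^{i} T_i$ gives ${c_{kl}}^{i}\Pi^{lj} + {c_{kl}}^{j}\Pi^{il}$ as the coefficient of $T_i \otimes T_j$; finally the skew-symmetry $\Pi^{il} = -\Pi^{li}$ of the bivector converts the second summand into $-{c_{kl}}^{j}\Pi^{li}$. Reading off coefficients of the basis $T_i \otimes T_j$ of $\gxg$ then gives (\ref{lem_raction}) at every $g \in G$.

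The main obstacle is the functional-equation step: getting the two diffeomorphism factorizations right and recognizing that the left-translation term of (\ref{multiplicativity}), after right-translation back to $e$, is exactly conjugation and therefore contributes $Ad_a^{(2)}$ rather than a plain translation. Everything else — the Leibniz expansion, the use of $\P(e) = 0$, the identification of the flow of $R_{T_k}$ with left translation, and the closing index bookkeeping that invokes skew-symmetry to fix the sign in the second term — is routine once this identity is in place.
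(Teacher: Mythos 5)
Your proof is correct, but it takes a different route from the paper's. The paper's proof rests on a fact imported from Lu--Weinstein --- that the Lie derivative of a multiplicative tensor field along a right-invariant vector field is again right-invariant --- and then simply expands $\mathcal{L}_{R_{T_k}}(\Pi^{ij} R_{T_i}\otimes R_{T_j})$ by the Leibniz rule, right-translates the (constant, by that fact) result back to $e$, identifies it with $D\P(T_k)=f^{ij}{}_k\,T_i\otimes T_j$, and reads off components. You instead trivialize $\P$ by right translation to a $\gxg$-valued function $\~\P$, convert the multiplicativity identity (\ref{multiplicativity}) into the group cocycle equation $\~\P(ag)=Ad_a^{(2)}\~\P(g)+\~\P(a)$, and differentiate in $a$ along $\exp(tT_k)$; the $Ad^{(2)}$ term produces the structure-constant corrections and the $\~\P(a)$ term produces $f^{ij}{}_k$. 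The two arguments are infinitesimally the same computation (your cocycle identity is precisely the global statement whose derivative is the right-invariance of $\mathcal{L}_{R_{T_k}}\P$), but yours is self-contained: it proves, rather than quotes, the key invariance property, at the cost of the two translation factorizations and the observation $\P(e)=0$. The paper's version is shorter given the cited lemma and stays entirely in the language of Lie derivatives; your version makes visible why the answer is a cocycle-type formula and would generalize verbatim to any multiplicative tensor. All of your intermediate steps (the identification $f^{ij}{}_k=R_{T_k}(\Pi^{ij})(e)$ via $\Pi^{ij}(e)=0$, the factorizations $R_{(ag)^{-1}}\circ L_a=C_a\circ R_{g^{-1}}$ and $R_{(ag)^{-1}}\circ R_g=R_{a^{-1}}$, and the final use of skew-symmetry $\Pi^{il}=-\Pi^{li}$ to match the sign of the second term) check out against the conventions the paper uses.
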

\begin{proof}
First note (see e.g. \cite{luweinstein}) that Lie derivative of
every multiplicative tensor field along any right-invariant vector
field is always a right-invariant tensor field. Then
\[ \mathcal{L}_{R_{T_{k}}}(\P) = \mathcal{L}_{R_{T_{k}}} ( \Pi^{ij} \ R_{T_{i}} \otimes R_{T_{j}}) =  R_{T_{k}}(\Pi^{ij})  \ R_{T_{i}} \otimes R_{T_{j}} + \]
\[ + \Pi^{ij} \ [R_{T_{k}},R_{T_{i}}] \otimes R_{T_{j}} + \Pi^{ij} \ R_{T_{i}} \otimes [R_{T_{k}},R_{T_{j}}] = \]
\[ = (R_{T_{k}}(\Pi^{ij}) - {c_{kl}}^{i} \Pi^{lj} + {c_{kl}}^{j} \Pi^{li} ) \ R_{T_{i}} \otimes R_{T_{j}}. \]
Therefore
\[ D\P(T_{k}) = R_{g^{-1} \ast}([\mathcal{L}_{R_{T_{k}}}(\P)](g)) = (R_{T_{k}}(\Pi^{ij}) - {c_{kl}}^{i} \Pi^{lj} + {c_{kl}}^{j} \Pi^{li} )(g) T_{i} \otimes T_{j}. \]
Since $\<T_{i} \otimes T_{j}, D\P(T_{k})\> = {f^{ij}}_{k}$, we
finally get
\[ R_{T_{k}}(\Pi^{ij})(g) = {c_{kl}}^{i} \Pi^{lj}(g) - {c_{kl}}^{j} \Pi^{li}(g) + {f^{ij}}_{k}. \]
\end{proof}

\begin{rem}
Note that the preceding lemma holds for arbitrary multiplicative
tensor field $\P$, no matter whether $G$ is connected or not. For
$\P$ a Poisson-Lie group bivector, the numbers ${f^{ij}}_{k}$
constitute the structure constants of Lie algebra dual to $\g$, that
is $[T^{i},T^{j}]_{\g^{\ast}} = {f^{ij}}_{k} T^{k}$.
\end{rem}

Let us observe that if $A = A_{k} R_{T^{k}}$, we may consider
$A_{k}$ as the component $1$-forms of $1$-form $\~A$ on $\Sigma$
with values in Lie algebra $\g^{\ast}$. That is
\begin{equation} \~A(p) := A_{k}(p) T^{k}. \end{equation}\
One can easily verify that this definition does not depend on the
choice of the basis $(T_{i})_{i=1}^{n}$ in $\g$.

Right-invariant Maurer-Cartan $1$-form $\Theta_R$ can be expanded as
\begin{equation} \Theta_R(g) = R_{T^{k}}(g) T_{k}, \end{equation}
for all $g \in G$. We define $\Theta_R^X$ as its pullback by $X$ to
$\Sigma$:
\begin{equation} \Theta_R^X := X^{\ast}(\Theta_R) \equiv T_X^{k} T_k. \end{equation}

Poisson bivector $\P$ induces for each $g \in G$ a linear map $\Pi(g): \g^{\star} \rightarrow \g$, defined as
\begin{equation} \label{piinduced} \< \eta, \Pi(g)(\xi) \> := \< \P(g), R_{\xi}(g) \otimes R_{\eta}(g) \>, \end{equation}
for all $\xi, \eta \in \g^{\ast}$, where $R_{\xi}(g) = R_{g^{-1}}^{\ast}(\xi)$.

Using this definitions, we can rewrite the equations of motion in a
very elegant way:

\begin{tvrz} \label{psm_pl_tvrzmain}
Equations of motion of a Poisson-Lie sigma model can be written in
the coordinate-free form
\begin{equation} \label{psm_eqm1pl}
\Theta_R^{X} = \Pi(X)(\~A),
\end{equation}
\begin{equation} \label{psm_eqm2pl}
d\~A + \frac{1}{2} [\~A \^ \~A]_{\g^{\ast}} = 0,
\end{equation}
where $\Pi(X)$ stands for the map $\Pi(g): \g^{\ast} \rightarrow \g$,
defined by (\ref{piinduced}), taking $g = X(p)$.
\end{tvrz}
\begin{proof}
From  (\ref{psm_eqm1pla}) and (\ref{piinduced}) , we have
\[ \<T^{i}, \Theta_R^{X} \> = A_{j} \Pi^{ji}(X) = A_{j} \<T^{i}, \Pi(X)(T^{j}) \> = \<T^{i}, \Pi(X)(\~A) \>. \]
To derive the second equation, we just put (\ref{lem_raction}) into
(\ref{psm_eqm2pla}) and use (\ref{psm_eqm1pla}):
\[ 0 = dA_{k} + \frac{1}{2} R_{T_k}(\Pi^{ij})(X) A_{i} \^ A_{j} + {c_{kj}}^{i} A_{i} \^ T_{X}^{j} = \]
\[ = dA_{k} + \frac{1}{2} \big( {c_{kl}}^{i} \Pi^{lj}(X) - {c_{kl}}^{j} \Pi^{li}(X)\big) A_{i} \^ A_{j} + \]
\[ + \frac{1}{2} {f^{ij}}_{k} A_{i} \^ A_{j} + {c_{kj}}^{i} A_{i} \^ T_X^{j} = \]
\[ = dA_{k} + \frac{1}{2} {f^{ij}}_{k} A_{i} \^ A_{j} + {c_{kl}}^{i} A_{i} \^ \Pi^{lj}(X) A_{j} + {c_{kj}}^{i} A_{i} \^ T_X^{j} \stackrel{(\ref{psm_eqm1pla})}{=} \]
\[ \stackrel{(\ref{psm_eqm1pla})}{=} dA_{k} + \frac{1}{2} {f^{ij}}_{k} A_{i} \^ A_{j} - {c_{kl}}^{i} A_{i} \^ T_{X}^{l} + {c_{kj}}^{i} A_{i} \^ T_X^{j} = \]
\[ = dA_{k} + \frac{1}{2} {f^{ij}}_{k} A_{i} \^ A_{j}. \]
\end{proof}
We have just found a very interesting result. For general
Poisson-Lie sigma model, the second equation of motion
(\ref{psm_eqm2}) takes the form of "zero curvature" equation for
$\g^{\ast}$-valued $1$-form $\~A$. The most important fact is that the
field $X$ is not anyhow present in the equation
$(\ref{psm_eqm2pl})$. It is the equation for $2$-forms on $\Sigma$
only.

This generalizes the result brought in \cite{spanele}, where $\g$ is
supposed to be semisimple with coboundary Lie bialgebra $(\g,\delta
= \Delta(r))$. We can quickly derive the form of equations presented
there, as is done in the following corollary of the proposition
\ref{psm_pl_tvrzmain}:

\begin{definice}
Let $p \in V \otimes V$ be a bilinear form on vector space $V$. We
denote $\ul{p}: V \rightarrow V^{\ast}$ the induced linear map
defined as
\begin{equation} \<w,\ul{P}(v)\> := P(v,w), \end{equation}
for all $v,w \in V$, where $\<\cdot,\cdot\>$ denotes the canonical
pairing on $V$.
\end{definice}

\begin{cor}

Let $G$ be a Poisson-Lie group with semisimple coboundary tangent
Lie bialgebra $(\g,\delta = \Delta(r))$.

We set $R := \ul{a} \circ \ul{K}$, where $K$ is the Killing form on
$\g$ and $a \in \bigwedge^{2} \g$ is the skew-symmetric part of $r$.

Moreover, we define a $1$-form $B$ on $\Sigma$ with values in $\g$
as
\[ B := \ul{K}^{-1}(\~A) \equiv K^{-1}(\~A,\cdot), \]
where $K^{-1}$ is the inverse of the Killing form $K$. Equations
(\ref{psm_eqm1pl}) and (\ref{psm_eqm2pl}) can be then written in the
form
\begin{equation} \label{psm_ssplsm_eqm1} \Theta_{R}^{X} + \big(R - Ad_{X} R Ad_{X}^{-1}\big)(B) = 0, \end{equation}
\begin{equation} \label{psm_ssplsm_eqm2} dB + \frac{1}{2} [B \^ B]_{R} = 0, \end{equation}
where a Lie bracket $[\cdot,\cdot]_{R}$ on $\g$ is defined as
$[X,Y]_{R} := [R(X),R(Y)]_{\g}$, for all $X,Y \in \g$.
\end{cor}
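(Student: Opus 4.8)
The plan is to reduce both coordinate-free equations (\ref{psm_eqm1pl}) and (\ref{psm_eqm2pl}) to the stated form by first making the induced map $\Pi(g)\colon\g^{\ast}\to\g$ fully explicit in the coboundary case, and then transporting everything from $\g^{\ast}$ to $\g$ through the Killing isomorphism $\ul{K}\colon\g\to\g^{\ast}$. The map $R=\ul{a}\circ\ul{K}$ is the bridge between the two pictures, and the defining property of $R$ used throughout will be $ad$-invariance of the Killing form.

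First I would recall that for a semisimple coboundary bialgebra the Poisson bivector is the Sklyanin bracket $\P(g)=L_{g\ast}(a)-R_{g\ast}(a)$, in which only the skew part $a$ of $r$ survives: the symmetric part of $r$ is $ad$-invariant, hence bi-invariant, and cancels in $L_{g\ast}-R_{g\ast}$. Feeding this into definition (\ref{piinduced}) and using $R_{g^{-1}\ast}R_{g\ast}=\mathrm{id}$ and $R_{g^{-1}\ast}L_{g\ast}=Ad_{g}$ to evaluate the right-invariant frame, I expect $\<\eta,\Pi(g)(\xi)\>=a(Ad_{g}^{\ast}\xi,Ad_{g}^{\ast}\eta)-a(\xi,\eta)$ for $\xi,\eta\in\g^{\ast}$. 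The decisive move is to rewrite the coadjoint action through the Killing form: $ad$-invariance gives $Ad_{g}^{\ast}=\ul{K}\,Ad_{g^{-1}}\,\ul{K}^{-1}$, so that with $\ul{a}=R\,\ul{K}^{-1}$ the map collapses to $\Pi(g)=\bigl(Ad_{g}\,R\,Ad_{g}^{-1}-R\bigr)\circ\ul{K}^{-1}$.

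With this in hand the first equation is immediate: substituting $\~A=\ul{K}(B)$ into (\ref{psm_eqm1pl}) gives $\Theta_{R}^{X}=\Pi(X)(\ul{K}B)=\bigl(Ad_{X}\,R\,Ad_{X}^{-1}-R\bigr)B$, which is exactly (\ref{psm_ssplsm_eqm1}). For the second equation I would apply the constant isomorphism $\ul{K}^{-1}$ to (\ref{psm_eqm2pl}); since $\ul{K}$ commutes with $d$, the derivative term becomes $dB$, and the whole problem reduces to showing that $\ul{K}$ intertwines the dual bracket $[\cdot,\cdot]_{\g^{\ast}}$ with the bracket on $\g$ appearing in (\ref{psm_ssplsm_eqm2}). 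Here I would use the explicit coboundary form $[\xi,\eta]_{\g^{\ast}}=ad^{\ast}_{\ul{a}\xi}\eta-ad^{\ast}_{\ul{a}\eta}\xi$, the relation $\ul{a}\,\ul{K}=R$, and $\ul{K}^{-1}ad^{\ast}_{W}\ul{K}=ad_{W}$ (once more a restatement of $ad$-invariance of $K$) to convert the coadjoint actions into honest $\g$-commutators built out of $R$.

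The main obstacle is precisely this last bracket identification: carrying the antisymmetric dual bracket through $\ul{K}$ and matching it, under the wedge product of the two $\g$-valued one-forms, with $\frac{1}{2}[B\^ B]_{R}$. This is the step where the classical Yang--Baxter equation satisfied by $a$ (equivalently, the compatibility that makes $\delta^{\ast}$ a Lie bracket) must be invoked, and where one has to be scrupulous about the antisymmetrisation implicit in $[B\^ B]_{R}$ and about the sign and ordering conventions tying $R$ to $\ul{a}$ and $\ul{K}$; everything else is bookkeeping with $\ul{K}$, $\ul{a}$ and $Ad_{g}$.
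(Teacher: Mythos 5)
Your treatment of the first equation is the paper's argument written invariantly instead of in indices: the paper likewise starts from the Sklyanin form $\P(g)=L_{g\ast}(a)-R_{g\ast}(a)$, reads off $\Pi^{ij}(g)=Ad_{g}(a)^{ij}-a^{ij}$ in the right-invariant frame, and uses $Ad$-invariance of $K$ (in the index form $K_{jm}{\mathbf{P}^{j}}_{l}=K_{lj}{(\mathbf{P}^{-1})^{j}}_{m}$) to reassemble $\big(R-Ad_{X}RAd_{X}^{-1}\big)(B)$. Your identity $\Pi(g)=\big(Ad_{g}\,R\,Ad_{g}^{-1}-R\big)\circ\ul{K}^{-1}$ is exactly that computation packaged once and for all; this half is complete and correct, and arguably cleaner than the paper's.

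The gap is in the second half, and you have located it yourself without closing it. The paper reduces (\ref{psm_eqm2pl}) to (\ref{psm_ssplsm_eqm2}) by citing the intertwining identity (\ref{bigRisomorphism}), $[X,Y]_{R}=\ul{K}^{-1}[\ul{K}(X),\ul{K}(Y)]_{\g^{\ast}}$, which it states without proof; you defer the same identity to ``the step where the classical Yang--Baxter equation must be invoked.'' But if you actually run the computation you sketch --- $[\xi,\eta]_{\g^{\ast}}=ad^{\ast}_{\ul{a}\xi}\eta-ad^{\ast}_{\ul{a}\eta}\xi$ together with $\ul{K}^{-1}\,ad^{\ast}_{W}\,\ul{K}=ad_{W}$ and $\ul{a}\circ\ul{K}=R$ --- you land on $\ul{K}^{-1}[\ul{K}X,\ul{K}Y]_{\g^{\ast}}=[RX,Y]_{\g}+[X,RY]_{\g}$, the standard $R$-bracket, with no use of the Yang--Baxter equation at all. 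That is \emph{not} the bracket $[RX,RY]_{\g}$ that the statement defines $[\cdot,\cdot]_{R}$ to be, and the (modified) CYBE only relates $[RX,RY]_{\g}$ to $R\big([RX,Y]_{\g}+[X,RY]_{\g}\big)$ up to a multiple of $[X,Y]_{\g}$ --- it does not identify it with $[RX,Y]_{\g}+[X,RY]_{\g}$. So the step you postpone is not bookkeeping: either you must prove (\ref{bigRisomorphism}) with the bracket as literally defined (which the tools you list will not deliver), or you must record that the natural outcome of your own computation is the zero-curvature equation for $B$ with the bracket $[X,Y]=[RX,Y]_{\g}+[X,RY]_{\g}$ and reconcile that with the statement. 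As written, your proposal --- like the paper's own proof --- leaves the only nontrivial part of the second equation unestablished.
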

\begin{proof}
If we expand $K$ and $a$ as $K = K_{ij} T^{i} \otimes T^{j}$ and $a
= a^{ij} T_i \otimes T_j$ respectively, we get the matrix of $R$:
\[ {(R_{\X})^{i}}_{j} = K_{jl} a^{li}. \]
For $1$-form $B$ we can write from definition
\[ B \equiv B^{i} T_{i} = K^{ij} A_{j} T_{i}, \]
where $K^{ij}$ are the components of $K^{-1}$ in the basis $T_{i}
\otimes T_{j}$. Poisson bivector $\P$ on $G$ with coboundary tangent
Lie bialgebra can be written using Sklyanin bracket (see e.g.
\cite{luweinstein}) as
\[ \P(g) = L_{g \ast}(a) - R_{g \ast}(a) = R_{g \ast} \big( Ad_{g}(a) - a \big) = \big( Ad_{g}(a)^{ij} - a^{ij} \big) R_{T_i}(g) \otimes R_{T_j}(g). \]
Thus for the components $\Pi^{ij}$ in right-invariant basis
\[ \Pi^{ij}(g) = Ad_{g}(a)^{ij} - a^{ij}. \]
If we denote ${\mathbf{P}^{i}}_{j} := \<T^{i},Ad_{X}(T_j)\>$, we get
\[ \Pi^{ij}(X) = {\mathbf{P}^{i}}_{k} {\mathbf{P}^{j}}_{l} a^{kl} - a^{ij}. \]
Equation (\ref{psm_eqm1pla}) then reads
\[ 0 = T_{X}^{i} + \Pi^{ij}(X)A_{j} = T_{X}^{i} + \big( {\mathbf{P}^{i}}_{k} {\mathbf{P}^{j}}_{l} a^{kl} - a^{ij} \big) A_{j} = \]
\[ = T_{X}^{i} + \big( {\mathbf{P}^{i}}_{k} {\mathbf{P}^{j}}_{l} a^{kl} - a^{ij} \big) K_{jm} B^{m} = T_{X}^{i} + {(R_{\X})^{i}}_{m} B^m + K_{jm} {\mathbf{P}^{j}}_{l} {\mathbf{P}^{i}}_{k} a^{kl} B^{m} = \otimes.\]
Using the Ad-invariance of $K$, we have $K_{jm} {\mathbf{P}^{j}}_{l}
= K_{lj} {(\mathbf{P}^{-1})^{j}}_{m}$. Hence
\[ \otimes = T_{X}^{i} + {(R_{\X})^{i}}_{m} B^m + K_{lj} a^{kl} {(\mathbf{P}^{-1})^{j}}_{m} {\mathbf{P}^{i}}_{k} B^{m} = \]
\[ = T_{X}^{i} + {(R_{\X})^{i}}_{m} B^m - {\mathbf{P}^{i}}_{k} {(R_{\X})^{k}}_{j} {(\mathbf{P}^{-1})^{j}}_{m}  B^{m} = \]
\[ = \<T^{i}, \Theta_{R}^{X} + \big(R - Ad_{X} R Ad_{X}^{-1}\big)(B) \>. \]
This finishes the proof of (\ref{psm_ssplsm_eqm1}). Note that
\begin{equation} \label{bigRisomorphism} [X,Y]_{R} = \ul{K}^{-1} [\ul{K}(X),\ul{K}(Y)]_{\g^{\ast}}, \end{equation}
for all $X,Y \in \g$, where $[\cdot,\cdot]_{\g^{\ast}}$ is a Lie
bracket on $\g^{\ast}$ induced by Lie bialgebra structure. Second
equation (\ref{psm_ssplsm_eqm2}) follows from (\ref{psm_eqm2pl}) and
(\ref{bigRisomorphism}). Indeed, if we apply $\ul{K}^{-1}$ on both
sides of (\ref{psm_eqm2pl}), we get
\[ 0 = \ul{K}^{-1} \big( d\~A + \frac{1}{2} [\~A \^ \~A]_{\~\g} \big) = dB + \frac{1}{2} \ul{K}^{-1} [\~A \^ \~A]_{\~\g} \stackrel{(\ref{bigRisomorphism})}{=} \]
\[ \stackrel{(\ref{bigRisomorphism})}{=} dB + \frac{1}{2} [B \^ B]_{R}. \]
\end{proof}

\subsection{Construction of Poisson-Lie group structure on Drinfel'd double} \label{sec_construction}
In this subsection we give out an explicit method how to construct a
Poisson-Lie group structure $\P$ on Lie group $G$, such that the
given Lie bialgebra $(\g,\delta)$ is tangent to $(G,\P)$.

This particular method can be found (without proofs) in
\cite{normal}, \cite{loops} and in \cite{ybsigma}. We give out the
sketch of the process. All needed properties can be checked directly
(multiplicativity) or using the properties of multiplicative tensor
fields (see \cite{luweinstein}), like vanishing of the
Schouten-Nijenhuis bracket.

\begin{definice}
A {\bfseries{Drinfel'd double}} $D$ is a connected Lie group
corresponding to Lie algebra $\d$ of given Manin triple
$(\d,\g,\~\g)$. Let use denote $G$ and $\~G$ the connected Lie
subgroups corresponding to the subalgebras $\g$ and $\~\g$
respectively.
\end{definice}

\begin{rem}
Usually Drinfel'd double is assumed to be connected and simply
connected. However, only connectedness is required for the
construction of Poisson bivector. For each $\d$ there may be
therefore several possibilities for $D$.
\end{rem}
\begin{rem}
Note that we do not claim that there is a unique Manin triple for
each Drinfel'd double $D$.
\end{rem}

Let $(\d,\g,\~\g)$ be the given Manin triple, corresponding to Lie bialgebra $(\g,\delta)$. Let $D$ be a corresponding Drinfel'd double with Lie subgroups $G$ and $\~G$.
For each $g \in G$ we define a map $\Pi(g): \~\g \rightarrow \g$ as
\begin{equation} \label{pimap} \Pi(g) = P Ad_{g} \~P Ad_{g^{-1}} \~P, \end{equation}
where $P$ and $\~P$ denote the projectors from $\d$ to $\g$ and $\~\g$ respectively and $Ad_{g}$ is the adjoint representation of $D$ on $\d$.

Note that
\begin{equation} \<X,\Pi(g)(Y)\>_{\d} = - \<\Pi(g)(X),Y\>_{\d}, \end{equation}
for all $g \in G$ and $X,Y \in \~\g$.

\begin{definice} \label{pimatrix}
Let us take the basis $(T_{i},\~T^{j})_{i,j=1}^{n}$ of $\d$, satisfying the condition (\ref{canform}).
We denote the matrix of the map $\Pi(g)$ in the basis $(T_{i})_{i=1}^{n}$ of $\g$ and $(\~T^{j})_{j=1}^{n}$ of $\~g$ as $(\Pi(g))_{\X}$. Note that
\begin{equation} \label{pimatrixexpl} (\Pi(g))_{\X}^{ij} \equiv \<T^{i},\Pi(g)(\~T^{i})\> = \<\~T^{i},\Pi(g)(\~T^{j})\>_{\d}. \end{equation}
(Let us emphasize the difference between canonical pairing
$\<\cdot,\cdot\>$ on $\g$ and bilinear form $\<\cdot,\cdot\>_{\d}$
on $\d$.)
\end{definice}

\begin{rem} 
For $g \in G$ we can write the matrix of the adjoint representation
$Ad_{g^{-1}}$ in the basis (\ref{canform}) as
\begin{equation} \label{Admaticeinv} (Ad_{g^{-1}})_{\X} = \left( \begin{array}{cc} a(g)^{T} & b(g)^{T} \\ 0 & d(g)^T \end{array} \right), \end{equation}
where $a(g)$, $b(g)$, $d(g)$ are $G$-dependent $n \times n$
matrices. The matrix of the map $\Pi(g)$ then has the form
\begin{equation} \label{pibyad} (\Pi(g))_{\X} = b(g)a(g)^{-1}.\end{equation}
\end{rem}

\begin{tvrz}
Let $\Pi(g)$ be the map (\ref{pimap}) and $(\Pi(g))_{\X}$ its matrix (\ref{pimatrixexpl}).
We define a bivector field $\P$ on $G$ as
\begin{equation} \label{pdef} \P(g) := -(\Pi(g))_{\X}^{ij} R_{T_i} \otimes R_{T_j}, \end{equation}
for all $g \in G$, where $R_{T_{i}}(g) = R_{g \ast}(T_{i})$ denote the right-invariant
vector fields generated by $T_{i} \in \g$. Then $\P$ is the unique
Poisson-Lie group structure on $G$, such that Lie bialgebra
$(\g,\delta)$ is tangent to $(G,\P)$.

\begin{equation} \label{p} \P = \Pi^{ij} R_{T_i} \otimes R_{T_j} \equiv \frac{1}{2} \Pi^{ij} R_{T_{i}} \^ R_{T_{j}}. \end{equation}
\end{tvrz}

One can repeat this procedure taking $\~G$ instead of $G$ and interchanging the role of $\g$ and $\~\g$ to get the unique Poisson-Lie group structure on $\~G$. Its tangent Lie bialgebra is dual to $(\g,\delta)$, that is corresponding to Manin triple $(\d,\~\g,\g)$.

\subsection{Example}
We will show the most simple non-linear Poisson-Lie sigma model, such that its tangent Lie bialgebra is not coboundary. We will use the method introduced above to construct a Poisson bivector.

Lie algebra $\g$ is set in the basis $(T_1,T_2)$ as
\[ [T_1,T_2] = T_2. \]
We equip $\g$ with a Lie bialgebra cocommutator $\delta$ in the form
\[ \delta(T_2) := \beta (T_1 \otimes T_2 - T_2 \otimes T_1), \]
where $\beta \in \R - \{0\}$ and $\delta(T_1) = 0$. Such Lie bialgebra is not coboundary and corresponding Lie bracket on $\~g$ has the form 
\[ [\~T^{1},\~T^{2}]_{\g^{\ast}} = \beta \~T^{2}. \]
Other commutation relations in $\d$ can be easily computed using (\ref{psm_mixedrelations}).
In a sufficiently small neighbourhood of the unit of the corresponding Lie group $G$, we can use the parametrization
$g = e^{\alpha_1 T_{1}} e^{\alpha_2 T_{2}}$ and define the local coordinates $(y^{1},y^{2})$ as
\[ y^{k}(e^{\alpha T_{1}} e^{\alpha T_{2}}) := \alpha_{k}. \]
The matrices of adjoint representation of Drinfel'd double $D$ can be then calculated using the adjoint representation of Lie algebra $\d$ and the relation 
\begin{equation} (Ad_{e^{\alpha_1 T_1}e^{\alpha_2 T_2}})_{\X} = \prod_{k=1}^{2} e^{\alpha_{k} (ad_{T_{k}})_{\X}}.  \end{equation}
Then by (\ref{pibyad}) and (\ref{pdef}) one gets the matrix $\Pi^{ij}$ of the components of $\P$ with respect to the right-invariant basis as

\[ \Pi^{ij} = \left( \begin{array}{cc} 0 & \beta e^{y^{1}} y^{2} \\ - \beta e^{y^{1}} y^{2} & 0 \end{array} \right). \]

We denote $X^{i}(p) := y^{i}(X(p))$. Then $T^{i}_{X}(p) = {(e)^{i}}_{k}(p) dX^{k}(p)$, where $e$ is the matrix of functions on $\Sigma$ defined by (\ref{psm_ematicedef}). For our example 
\[ e(p) = \left( \begin{array}{cc} 1 & 0 \\ 0 & e^{X^{1}(p)} \end{array} \right). \]
The equations of motion (\ref{psm_eqm1pl}) and (\ref{psm_eqm2pl}) then take the form
\[ dX^{1} = - \beta e^{X^{1}} X^{2} A_{2}, \]
\[ dX^{2} = \beta e^{X^{1}} X^{2} A_{1}, \]
\[ dA_{1} = 0, \]
\[ dA_{2} + \beta A_{1} \^ A_{2} = 0. \]

\section{Conclusion} The proper coordinate independent framework
for formulation of sigma models on Poisson manifolds $(M,\P)$ is a
vector bundle map $(X,A): T\Sigma \rightarrow T^{\ast}M$. The map
$A$ of the total spaces can be also considered as $1$-form on
$\Sigma$ with values in the set of global smooth sections of the
pullback bundle  $X^{\ast}(T^{\ast}M)$.

Variation of the fields $(X,A)$ can be defined as \[ \~X(p) :=
\phi_{\epsilon}^{Y}(X(p)),
\]
\[
\~A(V_{p}) := \phi_{-\epsilon}^{Y \ast}\big((A + \~\epsilon
B)(V_p)\big),
\]
for all $p \in \Sigma$ and $V_{p} \in T_{p}(\Sigma)$ where
$\epsilon,\ \~\epsilon$ are real parameters, $\phi_{\epsilon}^{Y}$
is the local flow of a smooth vector field $Y$ on $M$ and $B$ is a
one form on $\Sigma$ with values in global sections of the pullback
bundle $X^{\ast}(T^{\ast}M)$. Variational principle with the
variations given above then leads to the well known equations of
sigma models on Poisson manifolds \cite{rakusaci1,spanele}.

Poisson sigma models can be constructed on Poisson-Lie groups
irrespectively if the corresponding bialgebra is coboundary or not.
The Poisson bivector is defined by virtue of the adjoint
representation of the group on the bialagebra and right-invariant
vector fields on the group. The equations of motion then can be
written in the coordinate independent form (\ref{psm_eqm1pl}),
(\ref{psm_eqm2pl}).

\section*{Acknowledgement}
This work was supported by the research plan LC527 of the Ministry
of Education of the Czech Republic and by the Grant Agency of the
Czech Technical University in Prague, Grant No. SGS
10/295/OHK4/3T/14.


\begin{thebibliography}{10}

\bibitem{rakusaci1}
P.~Schaller and T.~Strobl.
\newblock {P}oisson {S}tructure {I}nduced ({T}opological) {F}ield {T}heories.
\newblock {\em Mod.Phys.Lett.}, A9:3129--3136, 1994.

\bibitem{spanele}
I.~Calvo, F.~Falceto, and D.~Garc\'i�a-\'Alvarez.
\newblock Topological {P}oisson sigma models on {P}oisson-{L}ie groups.
\newblock {\em JHEP}, 0310(033), 2003.

\bibitem{rakusaci2}
P.~Schaller and T.~Strobl.
\newblock {P}oisson-{S}igma-{M}odels: {A} {G}eneralization of 2-{D} {G}ravity
  {Y}ang-{M}ills-{S}ystems. hep-th/9411163.

\bibitem{bojowaldstrobl}
M.~Bojowald, A.~Kotov and T.~Strobl
\newblock Lie algebroid morphisms, Poisson Sigma Models, and off-shell closed gauge symmetries.
\newblock {\em J.Geom.Phys.}, 54:400--426, 2005.

\bibitem{nakahara}
M.~Nakahara.
\newblock {\em Geometry, Topology and Physics}.
\newblock Taylor \& Francis, 2003.

\bibitem{steenrod}
N.~Steenrod.
\newblock {\em The Topology of Fibre Bundles}.
\newblock Princeton University Press, 1951.

\bibitem{calvo}
I.~Calvo.
\newblock {\em Poisson sigma models on surfaces with boundary: Classical and
  quantum aspects}.
\newblock PhD thesis, University of Zaragoza, 2006.

\bibitem{izuvaisman}
I.~Vaisman.
\newblock {\em Lectures on the Geometry of Poisson Manifolds}, volume 118 of
  {\em Progress in mathematics}.
\newblock Birkhäuser Verlag, 1994.

\bibitem{luweinstein}
J. Lu and A. Weinstein.
\newblock {P}oisson {L}ie groups, dressing transformations, and {B}ruhat decompositions.
\newblock {\em J. Differential Geom.}, 31:501--526, 1990.

\bibitem{normal}
J-P. Dufour and N.~T. Zung.
\newblock {\em Poisson Structures and Their Normal Forms}, volume 242 of {\em
  Progress in mathematics}.
\newblock Birkhäuser Verlag, 2005.

\bibitem{loops}
C.~Klim\v c\'ik and P.~\v Severa.
\newblock {P}oisson-{L}ie {T}-{D}uality and {L}oops of {D}rinfeld {D}oubles.
\newblock {\em Phys. Lett.}, B375:65--71, 1996.

\bibitem{ybsigma}
C.~Klim\v c\'ik.
\newblock {Y}ang-{B}axter $\sigma$-models and $d{S}$ / ${A}d{S}$ {T}-{D}uality.
\newblock {\em JHEP}, 0212(051), 2002.

\bibitem{tduality}
C.~Klim\v c\'ik and P.~\v Severa.
\newblock {T}-duality and the moment map. hep-th/9610198.

\end{thebibliography}
\end{document}